\newtheorem{definition}{Definition}
\newtheorem{lemma}{Lemma}
\newtheorem{remark}{Remark}
\newtheorem{corollary}{Corollary}
\def\diag{\mathrm{diag}}
\def\alert[#1]{{\large{[\bf{#1}]}}}
\let\eps=\varepsilon
\begin{document}
\bibliographystyle{siam}

\title{Simultaneous state-time approximation of the chemical master equation using tensor product formats
\thanks{
Partially supported by RFBR grants 12-01-00546-a, 11-01-12137-ofi-m-2011, 11-01-00549-a, 12-01-33013 mol-ved-a, 12-01-31056,
by Rus. Gov. Contracts $\Pi$1112, 14.740.11.0345, 14.740.11.1067, 16.740.12.0727, and Priority Research Program OMN-3
at Institute of Numerical Mathematics, Russian Academy of Sciences, Russia, 119333 Moscow, Gubkina 8.
         }
}
\author{Sergey Dolgov, Boris Khoromskij \\
\small{Max-Planck Institute for Mathematics in Sciences,} \\
\small{Germany, 04103 Leipzig, Inselstra{\ss}e 22} \\
\small{E-mail: {[sergey.dolgov,bokh]@mis.mpg.de}}}

\date{12 November 2013}

\maketitle

\begin{abstract}

{\emph{This is an essentially improved version of the preprint \cite{dkh-cme-2012}.
This manuscript contains all the same numerical experiments, but some inaccuracies in the description of the modeling equations are corrected.
Besides, more detailed introduction to the tensor methods is presented.
}}

\vspace*{0.5cm}

We study the application of the novel tensor formats (TT, QTT, QTT-Tucker) to the solution of $d$-dimensional chemical master equations, applied mostly to gene regulating networks (signaling cascades, toggle switches, phage-$\lambda$).
For some important cases, e.g. signaling cascade models, we prove good separability properties of the system operator.
The Quantized tensor representations (QTT, QTT-Tucker) are employed in both state space and time,
and the global state-time $(d+1)$-dimensional system is solved in the structured form by using the ALS-type iteration.
This approach leads to the logarithmic dependence of the computational complexity on the system size.
When possible, we compare our approach with the direct CME solution and some previously known approximate schemes, and observe a good potential of the newer tensor methods in simulation of relevant biological systems.

\end{abstract}

\noindent\emph{Keywords:}\textit{\ }
multilinear algebra, tensor products, chemical master equation, parameter dependent problems

\noindent\emph{AMS Subject Classification:}\textit{\ }
65F50,  
15A69,  
65F10, 
82C31,  
80A30,  
34B08   

\section{Introduction}
The paper is devoted to the solution of the chemical master equation in structured tensor formats.
This problem arises mostly in the modeling of chemical reactions (kinetics) in genetic regulatory networks, cell systems and so on.
Typically the number of molecules of given chemical species in intra-cellular systems is of the order of hundreds.
At such concentrations, the simulation in terms of ordinary differential equations is inappropriate, since stochastic fluctuations in numbers of molecules (of the relative order $10^{-1}$) play
an important role in the evolution of the system \cite{steuer-cell-stoch-2004,arkin-stoch-kinetic-1998}.
In such a case, we cannot ignore the stochasticity of the system.
The stochastic description of the chemical reaction kinetics is given by the chemical master equation (CME), see \eqref{cme:eqn:cme} below, which simulates the probability distribution describing the state of the system \cite{vankampen-stochastic-1981,gillespie-ssa-1976,Gillespie-rigorous-CME-1992}.

During the history of the chemical kinetics modeling in biology, different approaches have been developed.
Monte Carlo methods are based on a statistically large
ensemble of realizations of the stochastic process associated with the CME.
The most famous is the stochastic simulation algorithm (SSA) by Gillespie \cite{gillespie-ssa-1976}.
Several improvements include the advanced sampling techniques \cite{hemberg-perfect-sampling-2007},
$\tau$-leaping methods \cite{gillespie-tau-leap-2001},
system-partitioning hybrid methods \cite{goutsias-quasiequilibrium-cme-2005,hellander-hybridcme-2007,jahnke-cme-piecewise-2012}.
Additionally, the chemical Fokker-Planck equation may be considered \cite{gillespie-cme-fpe-2002} to treat high-concentration systems, which can be discretized on coarser grids than the CME.

%


In order to analyze a stochastic reaction system, all Monte Carlo techniques require a lot of realizations.
Sometimes important biological events may be very rare, and extremely large number of realizations may be required to catch the relevant statistics.

A principal alternative to the Monte Carlo-type methods is the solution of the master equation directly as a linear ODE.
For many systems, the probability distribution vanishes rapidly outside a bounded domain.
Thus, it is possible to truncate the state space to a finite domain, and approximate the exact solution by the resulting truncated one \cite{munsky-fsp-2006}.

However, even the truncated state space volume is usually still very large and grows exponentially with the number of species.
This problem is called the \emph{curse of dimensionality} since \cite{bellman-dyn-program-1957}.
For example, a system with $10$ species and typical concentrations of about $100$ would be described by $100^{10}$ float numbers, which is infeasible with any supercomputer.
So, some low-parametric approximation is needed.

As such reduced grid-based methods, we would like to mention the sparse grids technique \cite{hegland-cme-2007},
as well as the initial tensor-structured approaches: the greedy algorithm in the canonical tensor format \cite{Ammar-cme-2011,Sueli-greedy_FP-2011,maday-greedy-2009,Cances-greedy-2011,Binev-conv_rate_greedy-2011,hegland-cme-2011}, and the solver based on the Tucker manifold dynamics via the projection onto the tangent space \cite{jahnke-cme-2008}.
The latter technique exploits the so-called \emph{Dirac-Frenkel} principle to propagate the system on the tensor product manifold.
For a detailed description we refer to \cite{lubich-koch-dynten-2010,lubich-dynht-2012,okhs-dyntt-2012}.

Alternatively, one can formulate an implicit time propagation scheme (Euler, Crank-Nicolson) and apply some tensor-structured solver to the linear system involved.
Moreover, we may consider the time as an independent variable and solve the coupled state-time system at once, taking further benefits from the reduction of complexity based on the tensor structuring.
This technique can be considered as an adaptive construction of the tensor manifold without a priori knowledge on the system, in contrast to the propagation of a predefined manifold in the tensor Dirac-Frenkel scheme.
The initial application of this approach to the Fokker-Planck equation was given in \cite{DKhOs-parabolic1-2012}, and here we shall use it for the chemical master equation as well.
Our results have been announced in \cite{dkh-cme-2012}, similar scheme was considered in \cite{kkns-cme-2013}.
In addition, a theoretical study of the tensor properties for some class of steady solutions was presented in \cite{kkns-cme-theory-2013}.

A certain part of this paper contains theoretical analysis of the tensor representation of the linear system matrix, which demonstrates a nice structure.
As a by-product, we prove TT rank estimates for the Hamiltonians of the Heisenberg (XYZ) spin system models.

A good separability of the initial data (operator and initial state) is a first ingredient of the tensor structured solution scheme.
However, low ranks of the inputs do not guarantee low ranks of the solution.
In some particular cases we may derive some estimates from the smoothness, but in a more general problem, we may only assume that a low-parametric representation of solution is possible.
Another principal component of the solution scheme is a linear solver which yields a low-rank solution if it exists in principle.
In our work, we use the Alternating Minimal Energy (AMEn) method \cite{ds-amr1-2013,ds-amr2-2013}.
It employs the ideas of the alternating least squares (ALS) \cite{kolda-review-2009,white-dmrg-1993,holtz-ALS-DMRG-2012,DoOs-dmrg-solve-2011}, improved by combining with the Steepest Descent iteration to determine a new correction direction.
As a result, we obtain the globally convergent iterative method for the linear system solution in a tensor representation.
The initial verification of the AMEn methods and their comparison with the ALS
has involved some examples of the Fokker-Planck and master equations \cite{ds-amr2-2013}.
It was found that the simpler ALS technique fails to solve nonsymmetric systems (which appear in the CME simulation), and reformulation of the problem in the normal equations is too expensive, whereas the AMEn method is much faster and provides a desired accuracy, see Section \ref{sec:amrs} for an additional discussion.
Here we focus on a more thorough study of CME models.



The paper is organized as follows.
In Section 2, the chemical kinetic and master equations are presented, and basic properties are mentioned.
Section 3 describes tensor formats, principal techniques and corresponding notations.
Section 4 is devoted to some typical CME operators and their analytic tensor-structured representations.
In Section 5 we present the main computational schemes for the time propagation, as well as the stationary solution.
Finally, Sections 6 and 7 provide the illustrative numerical experiments and conclusion.

\section{Problem statement}\label{sec:cme_formul}
Suppose that $d$ different active chemical species $S_1,...,S_d$ in a well-stirred medium can react in $M$ reaction channels.
Denote the vector of their concentrations $\mathbf{x} = (x_1,...,x_d) \in \mathbb{R}_+^d$, $\mathbb{R}_+ = \{x \in \mathbb{R}: x \ge 0\}$.
Each channel is specified by a \emph{stoichiometric vector} $\mathbf{z^m} \in \mathbb{Z}^d$, and a \emph{propensity} function $w^m(\mathbf{x}):~\mathbb{R}_+^d \rightarrow \mathbb{R}_+, ~m=1,...,M$.

The deterministic ODE on the concentrations is written as follows,
\begin{equation}
\dfrac{dx_i}{dt} = \sum\limits_{m=1}^M z^m_i w^m(\mathbf{x}), \quad i=1,...,d, \quad x_i \ge 0.
\label{cme:eqn:ode_classic}
\end{equation}
However, this model describes the process good enough only in the ``classical'' limit, when the amount of molecules in the system is very large (of the order of the Avogadro constant).
In some biological systems (cells, viruses), the species are contained in very small amounts w.r.t. the system volume, e.g. tens-thousands molecules.
In this case, the reaction occurrence is a stochastic process, and the quality of the model \eqref{cme:eqn:ode_classic} becomes very poor.

Moreover, if we measure concentrations $x_i$ in terms of the numbers of molecules (\emph{copy numbers}), it does not hold $x_i \in \mathbb{R}$, but $x_i \in (\{0\}\cup\mathbb{N})$ instead, since molecules of each specie are assumed to be non-divisible (there may be of course splitting reactions, but their products should be considered as separate species).
In the following, by ``concentration'' $x_i$, we always mean the copy number of a specie, so that $x_i$ is a nonnegative integer.

Due to the stochasticity of the processes in each particular system, the observable quantities are computed using the averaging over a large time period, or a large ensemble of systems.
As an alternative to the Monte-Carlo-type stochastic simulation (SSA, \cite{gillespie-ssa-1976}), one may consider a \emph{deterministic} difference equation on the joint probability density, the chemical master equation \cite{vankampen-stochastic-1981}, which reads
\begin{equation}
\dfrac{dP(\mathbf{x},t)}{dt} = \sum\limits_{m=1}^M w^m(\mathbf{x}-\mathbf{z^m}) P(\mathbf{x}-\mathbf{z^m},t) - w^m(\mathbf{x}) P(\mathbf{x},t),
\label{cme:eqn:cme}
\end{equation}
where
$$
P(\mathbf{x}, t): (\{0\} \cup \mathbb{N})^d \cup [0,T] \rightarrow \mathbb{R}_+
$$
is the joint probability of the numbers of molecules of the species to take particular values $x_1,...,x_d$ at the time $t$.

The operator $A$ in \eqref{cme:eqn:cme} appears as an infinite matrix, and $P$ is represented by an infinite vector.
However (at least for a finite time), very large copy numbers are unlikely to appear,
$$
P(\mathbf{x},t) \rightarrow 0, ~|\mathbf{x}| \rightarrow \infty.
$$

The latter property gives a natural way to approximate \eqref{cme:eqn:cme} by a finite problem to make it computationally tractable.
Namely, one employs the finite state space projection (FSP) algorithm \cite{munsky-fsp-2006}:
each $x_i$ is considered in a finite range, $x_i=0,...,N_i-1$.
Note that the probability normalization is not conserved in the truncated equation.
However, the solution deficiency is directly related to the magnitude of the truncated elements.
Choosing $N_i$ large enough, such that $P(\mathbf{x},t)$ is e.g. below the machine precision for $x_i>N_i$, one can neglect the error introduced by the state space truncation.

However, even if each $N_i = \mathcal{O}(N)$ is of the order of tens, the total number of degrees of freedom scales as $N^d$, which becomes infeasible rapidly with increasing number of species $d$.
In this paper, we will use the approximation techniques based on the separation of variables, to represent $N^d$ elements \emph{indirectly} by a much smaller amount of data.

\section{Tensor formats and methods}
\subsection{Separation of variables principles and notations}
A $d$-variate function $f$ is called separable, if it is representable in the form
\begin{equation}
f(x_1,\ldots,x_d) = f^{(1)}(x_1) f^{(2)}(x_2) \cdots f^{(d)}(x_d).
\label{eq:rank1sep}
\end{equation}
Note that we do not specify the domain for $\mathbf{x}$ here.
For example, it may be the whole space $\mathbb{R}^d$, or a finite hypercube, $\{0,\ldots,N-1\}^{\otimes d}$.
The latter case appears in particular in the truncated CME model.
A motivation to the idea of using tensor format approximations in the Chemical Master or Fokker-Planck equations comes from the following observation: if $x_1,\ldots,x_d$ are independent random quantities (i.e. there are no reactions between different species), the joint probability density function equals to the product of  marginal probabilities, $P(\mathbf{x}) = P^{(1)}(x_1) \cdots P^{(d)}(x_d)$.
Note that this is exactly the case of a separable function.
Like a system with all independent species is an ultimate degenerate case from the physical point of view, the separability property is a basic element of tensor product formats, and we may expect ``weakly'' coupled systems to be well-described by functions in low-parametric representations.

First of all, let us see why the separable representation is good from computational point of view.
Starting from this moment, we will consider only a finite domain, $x_i = 0,\ldots,N_i-1$.
If initially the function was defined on a continuous set, we assume that some grid-based discretization is applied.
With this assumption, a function $f$ is fully described by its values $\{f(\mathbf{x})\}$.
In turn, this set of values is stored in the computer memory as a $d$-dimensional array, or \emph{tensor}.
Armed with this duality, by ``tensor'' we may also mean the function it is sampled from, and vice versa.
The memory consumption to store such a tensor is $\prod_{i=1}^d N_i \le N^d$.
This exponential growth with $d$ is called the \emph{curse of dimensionality}.
However, since the separable representation is fully defined by its \emph{factors} $f^{(i)}(x_i)$, $i=1,\ldots,d$, one needs to store only $\sum_{i=1}^d N_i \le d N \ll N^d$ values.
What is more important, operations like computation of a prescribed element, summation (integration via quadrature formulas), etc. are recast directly to the factor elements, and are cheap as well.

Unfortunately, this form of separability is a very rare case in practice.
However, a working idea is to approximate a function with a certain \emph{sum} of separable ones.
A very straightforward approach is the direct sum of several \emph{components}:
$$
f(x_1,\ldots,x_d) = \sum\limits_{\alpha=1}^R f^{(1)}_{\alpha}(x_1) f^{(2)}_{\alpha}(x_2) \cdots f^{(d)}_{\alpha}(x_d).
$$
This representation is the so-called \emph{canonical rank-$R$} tensor format, also known as CANDECOMP and PARAFAC.
It is traditional and commonly used in multilinear algebra and numerical analysis since \cite{hitchcock-rank-1927}, see the following surveys and lecture notes \cite{kolda-review-2009,khor-survey-2011,khor-lectures-2010,hackbusch-2012,larskres-survey-2013}.
The canonical format requires $\mathcal{O}(dNR)$ values to store, and as soon as the \emph{tensor rank} $R$ can be chosen small, provides a substantial reduction of degrees of freedom.
The well-known drawback is that it is difficult to approximate a given function in the canonical format, due to its intrinsic uncloseness, and ill-posedness of the optimization problem on factor elements in the case $R>1$.
As a remedy, one may consider the greedy approach, by looking for consecutive rank-1 corrections (stable), see \cite{Cances-greedy-2011,Ammar-cme-2011,maday-greedy-2009} and references therein.
However, the quality of rank-1 residual approximations deteriorates rapidly with iterations, and as a result, the greedy method may stagnate at rather high levels of solution error.

To stay withing the formats which form closed manifolds, and hence make the approximation problem well-posed,
one considers the representations based on the low-rank decomposition of a matrix.
A traditional approach in this class is the Tucker format \cite{tucker}:
\begin{equation}
f(x_1,\ldots,x_d) = \sum\limits_{\gamma_1=1}^{r_1} \cdots \sum\limits_{\gamma_d=1}^{r_d} G_{\gamma_1,\ldots,\gamma_d} f^{(1)}_{\gamma_1}(x_1) f^{(2)}_{\gamma_2}(x_2) \cdots f^{(d)}_{\gamma_d}(x_d),
\label{eq:tucker}
\end{equation}
%
However, the Tucker \emph{core} $G$ contains $\prod r_i \le r^d$ elements, i.e. still suffers from the curse of dimensionality.
Though it may be quite efficient for low $d$ and $r$, e.g. in three-dimensional problems \cite{khor-chem-2011,khor-ml-2009,meyer-mctdh-book-2009}, in higher dimensions there exist more robust techniques.

One of the most simplest but powerful decompositions is the Tensor Train (TT), or Matrix Product States (MPS) format.
The Matrix Product States representation has appeared in the quantum physics community \cite{ostlund-dmrg-1995,white-dmrg-1993}, along the line with some algorithms.
For example, the so-called Density Matrix Renormalization Group (DMRG) \cite{ostlund-dmrg-1995,white-dmrg-1993} is a numerical variational technique in the MPS format devised to obtain the ground states of spin systems with high accuracy.
It traces back to \cite{white-dmrg-1993,ostlund-dmrg-1995}, and it is nowadays the most efficient method for one-dimensional quantum systems.
In the community of numerical analysis, the MPS was reopened in 2009, as
the so-called \emph{tensor train format}, or simply TT format \cite{ot-tt-2009,osel-tt-2011}.

As well as the canonical and Tucker representations, the TT format exploits the summation of separable components.
A crucial thing is the \emph{structure} of such summation.
\begin{definition}
Given a function $f(\mathbf{x})$, the \emph{Tensor Train representation} reads
\begin{equation}\label{par:tt1}
f(x_1,\ldots,x_d) = \sum\limits_{\alpha_1=1}^{r_1} \cdots \sum\limits_{\alpha_{d-1}=1}^{r_{d-1}} f^{(1)}_{\alpha_1}(x_1) f^{(2)}_{\alpha_1,\alpha_2}(x_2) \cdots f^{(d-1)}_{\alpha_{d-2},\alpha_{d-1}}(x_{d-1}) f^{(d)}_{\alpha_{d-1}}(x_d),
\end{equation}
where $\bm{\alpha} = (\alpha_1,\ldots,\alpha_{d-1})$, $\bm{r}=(r_1,\ldots,r_{d-1})$ are the so-called \emph{TT ranks}, and $f^{(i)}$ are the \emph{TT factors}.
\end{definition}
Note that to store each TT factor in a computer memory, one needs only a three-dimensional array with sizes $r_{i-1} \times N_i \times r_i$ (we agree that $r_0=r_d=1$ for generality).
As soon as $r_i$ can be chosen bounded by a moderate constant, $r_i\le r$, the total storage estimates as $\mathcal{O}(dNr^2)$, and the reduction w.r.t. $N^d$ is significant.
Considering each factor as a matrix-valued function on $x_i$, i.e. taking $\alpha_{i-1},\alpha_i$ as row and column indices, respectively, one may write a matrix product counterpart of \eqref{par:tt1} (hence the name Matrix Product States),
\begin{equation}
f(x_1,\ldots,x_d) = f^{(1)}(x_1) \cdots f^{(d)}(x_d), \quad f^{(i)}(x_i) \in \mathbb{R}^{r_{i-1} \times r_i}.
\label{eq:tt2}
\end{equation}
Clearly, the separability \eqref{eq:rank1sep} is a particular case of any format with all $r_i=1$.

The term TT rank has a strong algebraic meaning, namely, it is nothing else than the matrix rank of a \emph{matricization} of a tensor.
\begin{definition}
By a \emph{multiindex} $\mathbf{x} = \overline{x_1,\ldots,x_k}$ we denote an index which takes all possible combination of values of $x_1,\ldots,x_k$, i.e. if $x_i=1,\ldots,N_i$, then
$$
\overline{x_1,\ldots,x_k} = x_1 + (x_2-1)N_1 + \cdots + (x_k-1) N_1 \cdots N_{k-1}.
$$
\end{definition}
Now we may introduce the matricization.
\begin{definition}
Tensor entries are said to be arranged into a $i$-th \emph{unfolding matrix} $f^{\{i\}}$, if
$$
f^{\{i\}} = \begin{bmatrix}f^{\{i\}}_{\overline{x_1,\ldots,x_i},\overline{x_{i+1},\ldots,x_d}}\end{bmatrix} \in \mathbb{R}^{N_1\cdots N_i \times N_{i+1} \cdots N_d}, \quad f^{\{i\}}_{\overline{x_1,\ldots,x_i},\overline{x_{i+1},\ldots,x_d}}=f(x_1,\ldots,x_d),
$$
i.e. $\overline{x_1,\ldots,x_i}=1,\ldots,N_1\cdots N_i$ is a row index, and $\overline{x_{i+1},\ldots,x_d}=1,\ldots,N_{i+1} \cdots N_d$ is a column index.
\end{definition}
As is shown in \cite{ot-tt-2009,osel-tt-2011}, for a TT rank it holds $r_i = \mathrm{rank}(f^{\{i\}})$.
Therefore, for a fixed $\bm{r} = (r_1,\ldots,r_{d-1})$, the multilinear representation \eqref{par:tt1} defines a closed embedded manifold
$\mathbb{TT}_{\bm{r}}$ in the linear space of all $d$-tensors.
What is important, the matrix routines (like QR and SVD decompositions) of each unfolding matrix are recast to the corresponding operations on factors.
The main consequence is a fast robust \emph{TT truncation}, or \emph{rounding} procedure: given a tensor $f$ in the TT format, find a TT approximation $\tilde f$ with the prescribed accuracy threshold $\varepsilon$ and smallest possible TT ranks.
The asymptotic complexity of the TT rounding is $\mathcal{O}(dNr^3)$, i.e. free from the curse of dimensionality.
Moreover, algebraic operations such as additions, pointwise or scalar products over the elements of initial tensors (vectors of size $N^d$) are recast to operations with factors.
Taking into account the similarity of the TT rounding with the standard rounding of a number, one may think of the \emph{tensor arithmetics}, see \cite{osel-tt-2011} for details.

Discrete operators acting on vectors of size $N^d$ deserve specific consideration.
The multidimensional matrix-by-vector product reads
\begin{equation}
g(\mathbf{x}) = \sum\limits_{\mathbf{x'}} A(\mathbf{x},\mathbf{x'}) f(\mathbf{x'}), \qquad \mathbf{x}=(x_1,\ldots,x_d), \quad \mathbf{x'} = (x'_1,\ldots,x'_d).
\label{eq:tt-mv}
\end{equation}
However, $A$ as a matrix with indices $\mathbf{x},\mathbf{x'}$ is usually of full rank, and its straightforward TT representation $A^{(1)}(x_1) \cdots A^{(2d)}(x_d')$ is inefficient, since $r_d = N^d$.
Therefore, the \emph{matrix} TT format is introduced using the permutation of variables:
\begin{equation}
A(x_1,\ldots,x_d, x'_1,\ldots,x'_d) = \sum\limits_{\beta_1=1}^{R_{1}} \cdots \sum\limits_{\beta_{d-1}=1}^{R_{d-1}} A^{(1)}_{\beta_1}(x_1,x'_1) A^{(2)}_{\beta_1,\beta_2}(x_2,x'_2) \cdots A^{(d)}_{\beta_{d-1}}(x_d,x'_d).
\label{eq:ttm}
\end{equation}
Then the matrix-by-vector product \eqref{eq:tt-mv} is written in the TT format as follows,
$$
g(x_1,\ldots,x_d) = g^{(1)}(x_1) \cdots g^{(d)}(x_d), \quad g^{(i)}_{\overline{\alpha_{i-1},\beta_{i-1}},\overline{\alpha_i,\beta_i}}(x_i) = \sum\limits_{x'_i} A^{(i)}_{\beta_{i-1},\beta_i}(x_i,x'_i) f^{(i)}_{\alpha_{i-1},\alpha_i}(x'_i).
$$
Moreover, the standard Kronecker product of matrices $C=A \otimes B$ means elementwise $C(\overline{xy},\overline{x'y'})= A(x,x') B(y,y')$, i.e. is also a degenerate case of the matrix TT format with rank 1.

In the next section we will consider explicit TT structures of CME operators and the corresponding TT ranks.
So, we need to distinguish the TT ranks (ranks of unfolding matrices) from the ordinary rank of a matrix in the initial variables.
\begin{definition}
By \emph{tensor rank} we denote a tuple of TT ranks,
$$
\mathrm{\mathbf{trank}}(f) = (r_1,\ldots,r_{d-1}), \qquad \mathrm{\mathbf{trank}}(A)=(R_1,\ldots,R_{d-1}),
$$
where the numbers $r_i$, $i=1,\ldots,d-1$ are according to \eqref{par:tt1}, and $R_i$ are according to \eqref{eq:ttm}.
Asymptotic estimates are written w.r.t. the maximal TT rank, i.e.
$$
\mathrm{trank}(\cdot) = \max \mathrm{\mathbf{trank}}(\cdot).
$$
\end{definition}

This definition is not limited to the TT format only.
In a similar way, any tensor format may be characterized by its tensor rank tuple.
For example, another alternative to the Tucker and TT formats might be the $\mathcal{H}$T format \cite{hackbusch-2012}.
It also has an analog in the physics community, the so-called \emph{Tensor Tree Networks} (TTN) representation \cite{fannes-TTN-1992}.

\subsection{Virtual dimensions and quantization}
We see that, as soon as the data is ``structured'' well enough, and TT ranks $\mathrm{\mathbf{trank}}(f)$ are moderate and do not depend on $N$ and $d$, the complexity of TT operations is linear in $d$.
This fact motivated the development of the so-called \emph{Quantized-TT} (QTT) format \cite{osel-2d2d-2010,khor-qtt-2011}, exploited heavily in this paper as well.
The idea is as follows.
Suppose the number of admissible values for each $x_i$ is a power of $2$, i.e. $N_i=2^L$.
Then they can be enumerated via the binary coding,
$$
x_i = \sum\limits_{l=1}^L x_{i,l} \cdot 2^{l-1}, \quad x_{i,l} \in \{0,1\}.
$$
It corresponds to the reshaping of a tensor into a $D=dL$-dimensional one, for which the TT decomposition is applied.
If the TT ranks of this $D$-tensor (or \emph{QTT ranks}) are small, then the storage is logarithmic, $\mathcal{O}(dL\cdot 2 \cdot \mathrm{trank}^2) = \mathcal{O}(d \log N)$.

The following 1D functions are exactly well-representable in the QTT format:
\begin{enumerate}
 \item exponential $\exp(\kappa x) = \mathrm{e}^{(1)}(x_1) \cdots \mathrm{e}^{(L)}(x_L), \quad \mathrm{e}^{(l)}(x_l) = \exp(\kappa x_l \cdot 2^{l-1})$ (QTT rank 1),
 \item trigonometric $\sin(\kappa x+\phi)$ (QTT rank 2) and polynomial $x^p$ (QTT rank $p+1$),
 \item delta-function $\delta(x-a) = \delta^{(1)}(x_1) \cdots \delta^{(L)}(x_L), \quad \delta^{(l)}(x_l) = \delta(x_l-a_l),~a= \sum\limits_{l=1}^L a_{l} \cdot 2^{l-1}$.
\end{enumerate}
The latter example often arises as the initial state in the Chemical Master Equation.
Lots of other smooth functions in 1D do not have exact QTT representations, but still admit highly accurate QTT approximations with rather small ranks, since the standard theory of polynomial interpolation may be used (e.g. Gaussian $\exp(-x^2)$, or Michaelis-Menten $1/(1+x)$ functions on uniform grids are numerically approximated with QTT ranks bounded by $8$ up to the accuracy $10^{-10}$).

Moreover, the QTT format allows simple constructive representations of basic operators (Laplace, gradient or shift)  \cite{khkaz-lap-2012} on  uniform tensor grids.

The logarithmic complexity w.r.t. the cardinality of the initial tensor makes the QTT format a very promising tool for high-dimensional problems.
There are algorithms, which rely essentially on the binary QTT structure,
such as the super-fast data-sparse Fourier transform \cite{dks-ttfft-2012}.
One may also consider the time as an additional dimension with its further decomposition via the QTT format \cite{DKhOs-parabolic1-2012,gavkh-qtt_cayley-2011}.

In higher dimensions the functions are usually assembled using the Kronecker products, i.e. in the rank-1 form \eqref{eq:rank1sep}, keeping in mind that each factor $f^{(i)}(x_i)$ is in turn the QTT format.
For example, the propensity functions in the CME often depend on only a few variables, say $w(\mathbf{x}) = \hat w(x_m)$.
Such functions have ``perfect'' rank-1 representations,
$$
w(\mathbf{x}) = w^{(1)}(x_1) \cdots w^{(d)}(x_d), \quad w^{(m)}(x_m) = \hat w(x_m), \quad w^{(i)}(x_i)=1,~i \neq m.
$$

However, this might be not the case, and the ranks of the straightforward QTT representation described above (referred later as \emph{linear QTT}) may grow rapidly with the accuracy.
For example, a pattern of the function $f(x_1,x_2)=\exp(-(x_1-x_2)^2+x_2^2)$ looks similar to a diagonal matrix.
Obviously, the TT rank of its approximation will be rather high, qualitatively proportional to the number of sufficiently large entries on the diagonal.

To relax this problem, a combined tensor representation was proposed, called the \emph{QTT-Tucker} format \cite{dk-qtt-tucker-2013}.
The initial variables are separated in the Tucker way \eqref{eq:tucker}.
To treat the curse of dimensionality, the Tucker core is stored in the TT format,
$$
G_{\gamma_1,\ldots,\gamma_d} = \sum\limits_{\alpha_1,\ldots,\alpha_{d-1}} G^{(1)}_{\alpha_1}(\gamma_1) G^{(2)}_{\alpha_1,\alpha_2}(\gamma_2) \cdots G^{(d)}_{\alpha_{d-1}}(\gamma_d).
$$
Finally, the Tucker factors are compressed in the QTT format,
$$
f^{(i)}_{\gamma_i}(x_i) = \sum\limits_{\gamma_{i,1},\ldots,\gamma_{i,L-1}} f^{(i,1)}_{\gamma_i,\gamma_{i,1}}(x_{i,1}) f^{(i,2)}_{\gamma_{i,1},\gamma_{i,2}}(x_{i,2}) \cdots f^{(i,L)}_{\gamma_{i,L-1}}(x_{i,L}).
$$
As was demonstrated in \cite{dk-qtt-tucker-2013}, the function-related tensors may require smaller QTT-Tucker ranks for the same accuracy, than the linear QTT ones.
However, the asymptotic storage complexity of the $\mathcal{H}$T or QTT-Tucker formats is cubic w.r.t. the tensor rank, while in the linear QTT it is quadratic, and it is hard to conclude a priory which format will perform better in each particular example.
On the other hand, since the QTT-Tucker is a certain combination of the TT decompositions, lots of TT algorithms may be simply reused.

Despite that all objects involved in the CME are high-dimensional, and hence are ``tensors'', we find it convenient to distinguish TT representations of ``matrix'' (``operator'')  \eqref{eq:ttm} and ``vector'' (``tensor'') \eqref{par:tt1}, especially because it allows to refer to simple algebra in full tensor spaces, like matrix-by-vector product, etc.

\subsection{Computational tensor methods}\label{sec:amrs}
The variety of tensor operations is not limited to a basic multilinear algebra described up to now.
To successfully solve high-dimensional equations (in particular, the Chemical Master Equation), one needs more advanced techniques: solvers of linear systems (stationary problems and implicit time schemes), eigenvalue problems, more sophisticated time integrators (like matrix exponential), etc.
The first attempts were connected with the usage of traditional iterative methods (e.g. CG, Arnoldi) equipped with the tensor arithmetics \cite{balgras-Htuck_gmres-2013,lebedeva-tensorcg-2010,TobKress-Krylov-2010,dc-tt_gmres-2013}.
However, these methods suffer from one important drawback: in addition to the solution and right-hand side, they require auxiliary vectors, which also have to be approximated in the tensor format.
As was observed in many examples (see e.g. \cite{dc-tt_gmres-2013} for the GMRES method), even if the solution may be well representable in the format, the residual and Krylov vectors manifest significantly larger tensor ranks, especially in latter iterations.
This introduces a significant overhead in the computational complexity.

An alternative approach, which is free from expensive auxiliary tensors, is the alternating variational methods.
The multilinearity of the considered tensor formats with respect to the elements of each factor allows to construct efficient iterative model reduction techniques.
A brief sketch of the alternating iterations is formulated as follows.
Given a quadratic function $J(f)$ to optimize and an initial guess $f$ in a tensor format, fix all the factors of $f$ except one, and optimize $J$ over the elements of that chosen factor.
\begin{algorithm}[t]
 \caption{ALS iteration} \label{alg:als}
 \begin{algorithmic}[1]
  \REQUIRE Function $J(f)$ and initial guess $f$ in the TT format~\eqref{eq:tt2}.
  \ENSURE Updated solution $\hat f$.
  \FOR[Cycle over TT cores]{$i=1,\ldots,d$}
    \STATE Find $\hat f^{(i)} = \arg\min_{f^{(i)}_{*}} J\left(\hat f^{(1)}(x_1) \cdots \hat f^{(i-1)}(x_{i-1}) \cdot f^{(i)}_{*}(x_i) \cdot f^{(i+1)}(x_{i+1}) \cdots f^{(d)}(x_d)\right)$
  \ENDFOR
  \RETURN $\hat f =  \hat f^{(1)}(x_1) \cdots \hat f^{(d)}(x_d)$.
 \end{algorithmic}
\end{algorithm}
We obtain the so-called Alternating Linear Scheme (ALS), one iteration in the TT format is shown in Alg. \ref{alg:als}.
Since $f$ depends linearly on $f^{(i)}$, the reduced (constrained) function at line 2 of Alg. \ref{alg:als} remains quadratic, and can be optimized efficiently.

Three types of functions are especially important for us: error $J(f) = \|f-g\|^2$ (corresponds to the low-rank approximation problem), energy $J(f) = (f,Af)-2(f,g)$ and residual $J(f) = \|Af-g\|^2$ (solution of linear systems).
The alternating approximation techniques have been developing since \cite{harshman-parafac-1970,beylkin-high-2005,lathauwer-rank1-2000} for the canonical and Tucker formats under the name Alternating \emph{Least Squares}.
More general description extended to linear system and eigenvalue problems (as well as the name Alternating \emph{Linear Scheme}) was provided in \cite{holtz-ALS-DMRG-2012}, including also a connection between the Tensor Train and Matrix Product States formats, and the DMRG and ALS methods.

However, the ALS Algorithm \ref{alg:als} is known to suffer from several drawbacks.
First, it does not allow to adapt tensor ranks during the iterations, and one has to guess them a priory, which is a difficult problem.
Second, the convergence may be extremely slow, and moreover, the method is likely to get stuck in spurious local minima, since the cost function $J$ is nonconvex w.r.t. the elements of \emph{all} TT factors.
Finally, even the reduced problem might still be large and/or ill-conditioned.

Fortunately, the cornerstone idea of alternating model reduction appeared to be very efficient after certain modifications.
The first cycle of papers \cite{holtz-ALS-DMRG-2012,DoOs-dmrg-solve-2011,Os-mvk2-2011,so-dmrgi-2011proc,tobler-ht_dmrg-2011} is devoted to further improvements of the so-called \emph{two-sited DMRG}.
The main difference with Alg. \ref{alg:als} is that the target function is constrained to two TT factors, \\
$
\min_{f^{(i)}_{*}, f^{(i+1)}_{*}} J\left(\hat f^{(1)}(x_1) \cdots \hat f^{(i-1)}(x_{i-1}) \cdot f^{(i)}_{*}(x_i) f^{(i+1)}_{*} (x_{i+1})  \cdot f^{(i+2)}(x_{i+2}) \cdots f^{(d)}(x_d)\right),
$
instead of one.
This allows to include the rank adaptation step, and accelerate the convergence in some cases.
However, the issue of stagnation in local minima still appears, especially in high-dimensional problems.

A crucial remedy was to combine the ALS method with the classical iterative steps \cite{ds-amr1-2013,ds-amr2-2013}.
The so-called Alternating Minimal Energy (AMEn) algorithm eliminates disadvantages of both approaches.
In each step, the solution is updated according to the ALS scheme, but in the next step, the TT representation of the solution is \emph{enriched} by adding a partial TT format of the residual, similarly to the Steepest Descent, or FOM(1) method.
Therefore, in the forthcoming updates, the system is projected onto a wider basis, containing both  the current solution and residual components.
As a result, a global convergence rate may be estimated using the Steepest Descent theory,
although the practically observed convergence is significantly faster, and allows to treat efficiently even non-symmetric linear systems.

As was found, even a very crude TT approximation to the residual is enough for the method to converge efficiently.
Therefore, to compute it cheaply, we may employ a simple \emph{second} ALS iteration for the TT approximation purpose (AMEn+ALS according to \cite{ds-amr2-2013}).
The whole technique is summarized in Alg. \ref{alg:amen} (for nonsymmetric systems, the minimization of $J(f)$ is formally replaced by the Galerkin linear system arising from $\partial J/\partial f = 0$).
It is the method that will be used in our numerical experiments to solve the linear systems arising from implicit time schemes.

\begin{algorithm}[t] 
 \caption{AMEn iteration (for brevity, $a^{(m)}$ denotes $a^{(m)}(x_m)$, $a \in\{f,\hat f,z,\ldots\}$)} \label{alg:amen}
 \begin{algorithmic}[1]
  \REQUIRE Function $J(f)=(f,Af)-2(f,g)$, initial guesses $f$, $z$ in the TT format.
  \ENSURE Updated solution $\hat f$, residual $\hat z$.
  \FOR[Cycle over TT cores]{$i=1,\ldots,d$}
    \STATE Find $\hat f^{(i)} = \arg\min_{f^{(i)}_{*}} J\left(\hat f^{(1)} \cdots \hat f^{(i-1)} \cdot f^{(i)}_{*} \cdot f^{(i+1)} \cdots f^{(d)}\right)$.
    \STATE Reduce $\mathrm{\bf{trank}}_{i}(\hat f)$, \quad $\hat f = \hat f^{(1)} \cdots \hat f^{(i)} \cdot f^{(i+1)} \cdots f^{(d)}$ \COMMENT{Optionally}
    \STATE Update the residual \\
    $\hat z^{(i)} = \arg\min_{z^{(i)}_{*}} \left\|(A \hat f - g) - \left(\hat z^{(1)} \cdots \hat z^{(i-1)} \cdot z^{(i)}_{*} \cdot z^{(i+1)} \cdots z^{(d)}\right)\right\|^2$.
    \IF{$i<d$}
    \STATE Compute the enrichment \\
    $\hat s^{(i)} = \arg\min_{s^{(i)}_{*}} \left\|(A \hat f - g) - \left(\hat f^{(1)} \cdots \hat f^{(i-1)} \cdot s^{(i)}_{*} \cdot z^{(i+1)} \cdots z^{(d)}\right)\right\|^2$.
    \STATE Expand the basis $\hat f^{(i)}=\begin{bmatrix}\hat f^{(i)} & \hat s^{(i)}\end{bmatrix}.$
    \ENDIF
  \ENDFOR
 \end{algorithmic}
\end{algorithm}

The following lemma estimates the computational complexity.
\begin{lemma} \cite{holtz-ALS-DMRG-2012,DoOs-dmrg-solve-2011,ds-amr1-2013}\label{lem:als_complexity}
Suppose that a linear system $Af=g$ of size $N^d$ is given in the TT format, with $\mathrm{trank}(A) \leq r_A$, $\mathrm{trank}(g) \leq r_g$, and the solution ranks are kept $\mathrm{trank}(f) \leq r_f$.
Then, one iteration of the ALS or AMEn methods requires $\mathcal{O}(d N r_f^2 + d r_f^2 r_A + d r_f r_g)$
memory, and $\mathcal{O}(d N r_f^3 r_A + d N^2 r_f^2 r^2_A + d N r_f^2 r_g)$ operations.
\end{lemma}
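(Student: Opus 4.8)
The plan is to analyse one sweep of Algorithm~\ref{alg:als} (respectively Algorithm~\ref{alg:amen}) site by site, bounding the work and storage attached to each core $i=1,\ldots,d$ and then summing over $i$. For a fixed $i$ the sweep performs four elementary tasks: orthogonalisation of the core just processed, an update of the projected operator and right-hand side (the \emph{interface} or environment tensors), the local solve for $\hat f^{(i)}$, and—for AMEn only—the residual and enrichment updates of lines~4--7. I would show that each task costs $\mathcal{O}(N r_f^3 r_A + N^2 r_f^2 r_A^2 + N r_f^2 r_g)$ operations and $\mathcal{O}(N r_f^2 + r_f^2 r_A + r_f r_g)$ storage per core, which yields the stated totals after multiplying by $d$.

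First I would set up the interface tensors. Left-orthogonalising a core is a QR factorisation of the reshaping of $\hat f^{(i)}\in\mathbb{R}^{r_{i-1}\times N\times r_i}$ into an $(r_{i-1}N)\times r_i$ matrix, costing $\mathcal{O}(N r_f^3)$. The Galerkin projection of $A$ onto the current basis is carried by a left interface $\Phi_i\in\mathbb{R}^{r_i\times R_i\times r_i}$ and a right interface of the same shape; storing all of them gives the $\mathcal{O}(d\,r_f^2 r_A)$ memory term, the $d$ solution cores give $\mathcal{O}(d N r_f^2)$, and the right-hand side interfaces, of size $r_i\times r_i^g$ with $r_i^g\le r_g$, give $\mathcal{O}(d\,r_f r_g)$. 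Updating $\Phi_{i-1}\mapsto\Phi_i$ is a three-fold contraction of $\Phi_{i-1}$ with the ket core $\hat f^{(i)}$, the operator core $A^{(i)}$, and a second (bra) copy of $\hat f^{(i)}$; carried out in the cheapest order, the step against $A^{(i)}$—whose two physical legs each range over $N$—dominates at $\mathcal{O}(N^2 r_f^2 r_A^2)$, while the two contractions against solution cores cost $\mathcal{O}(N r_f^3 r_A)$ each. The analogous update of the right-hand side interface against $g^{(i)}$ costs $\mathcal{O}(N r_f^2 r_g)$. These are exactly the three operation terms in the statement.

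For the local solve I would stress that the $(r_{i-1}N r_i)\times(r_{i-1}N r_i)$ reduced matrix is \emph{never} assembled explicitly—this is forced by the memory bound, which contains no $N^2 r_f^4$ term—but is applied as a tensor-structured product $v\mapsto\Phi_{i-1}\,A^{(i)}\,\Phi_i\,v$ on vectors of length $r_{i-1}N r_i$. Splitting this matvec into three contractions reproduces the same three bounds: the contraction against the two physical legs of $A^{(i)}$ dominates at $\mathcal{O}(N^2 r_f^2 r_A^2)$, the two interface contractions contribute $\mathcal{O}(N r_f^3 r_A)$, and the reduced right-hand side $\tilde g^{(i)}$ is assembled in $\mathcal{O}(N r_f^2 r_g)$. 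Here one assumes a fixed, i.e.\ $\mathcal{O}(1)$, number of inner iterations of the local solver, so the per-site cost is a constant multiple of a single local matvec; this assumption is the one genuinely load-bearing point, and I would state it explicitly rather than try to prove convergence of the inner iteration (a direct dense local solve would cost $\mathcal{O}(N^3 r_f^6)$ and is excluded by the bound).

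The AMEn-specific lines~4--7 do not change the asymptotics. The residual core update and the enrichment are themselves least-squares projections of $A\hat f-g$ onto bases of bounded rank; since the method requires only a crude residual of rank $\lesssim r_f$ (as discussed in Section~\ref{sec:amrs}), the auxiliary tensors $z$ and $s$ carry ranks $\lesssim r_f$, and applying $A$ to $\hat f$ core-wise again costs $\mathcal{O}(N^2 r_f^2 r_A^2 + N r_f^3 r_A + N r_f^2 r_g)$ per core, reusing the same interfaces. The enrichment enlarges $\hat f^{(i)}$ by at most $\mathcal{O}(r_f)$ columns, which is absorbed into the constant hidden in $r_f$. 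Accordingly the main obstacle is bookkeeping rather than analysis: I must track every intermediate in the four contractions and verify that the contraction order keeps the single unavoidable $N^2$ factor (from the dense $N\times N$ blocks of $A^{(i)}$) multiplied only by $r_f^2 r_A^2$, never by an extra mode size or rank, and that the residual/enrichment machinery spawns no object of rank larger than $\mathcal{O}(r_f r_A)$.
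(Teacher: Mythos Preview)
The paper does not actually prove this lemma: it is stated with citations to \cite{holtz-ALS-DMRG-2012,DoOs-dmrg-solve-2011,ds-amr1-2013} and no proof follows, the text moving directly to the remark about ``weak places'' (unknown iteration count and rank bound). So there is no in-paper argument to compare against.

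That said, your analysis is correct and is precisely the standard derivation found in those references. You correctly identify the three storage contributions (solution cores, operator interfaces $\Phi_i$, right-hand-side interfaces), and the three operation contributions from the core contractions: the $N^2 r_f^2 r_A^2$ term from the two physical legs of $A^{(i)}$, the $N r_f^3 r_A$ term from the interface contractions against solution cores, and the $N r_f^2 r_g$ term from the right-hand side. You also correctly flag the one genuine assumption hidden in the bound, namely that the local solve is done iteratively with an $\mathcal{O}(1)$ number of matvecs rather than by a dense $\mathcal{O}(N^3 r_f^6)$ factorisation; this is exactly the convention used in the cited works and is consistent with the absence of any $N^2 r_f^4$ (assembled local matrix) term in the memory estimate. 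Your treatment of the AMEn enrichment as not changing the asymptotics, because the residual rank is kept $\mathcal{O}(r_f)$, also matches the discussion in \cite{ds-amr1-2013,ds-amr2-2013}. In short, your proposal is a faithful reconstruction of the argument the paper is citing, not an alternative route.
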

Weak places in this lemma are the total number of iterations and the actual rank bound $r_f$ of the solution.
In general, it is difficult to predict tensor ranks of e.g. CME solution a priori.
The number of iterations may be determined from the spectrum of the matrix, but in most cases it will be far larger than the one is really required.
Fortunately, the AMEn method does not tend to overestimate the quasi-optimal tensor rank provided by the TT rounding procedure at a given accuracy, which appears to be moderate in the numerical examples considered.

Finally, we note that the alternating methods for the QTT-Tucker format are constructed naturally from their corresponding TT versions as the building blocks (see \cite{dk-qtt-tucker-2013} for details).

\section{Tensor representation of typical CME operators}
In the following, we will use a more convenient counterpart to \eqref{cme:eqn:cme} with the help of the shift matrices.
Denote
\begin{equation}
J^{z} = \begin{bmatrix}
         0 & \cdots & 1 \\
           & \ddots & & \ddots \\
           & & \ddots & & 1 \\
           & & & \ddots & \vdots \\
         & &  &  & 0
        \end{bmatrix} \begin{matrix} \phantom{0} \\ \phantom{\ddots} \\ \leftarrow & \mbox{row} & N-z \\ \phantom{\ddots} \\ \leftarrow & \mbox{row} & N\phantom{-z,} \end{matrix},
        \quad \mbox{if}~z \ge 0,
\label{cme:eqn:shift_matrices}
\end{equation}
and for $z<0$ we agree $J^{z} = (J^{-z})^\top$.
Now we write the finite state approximation (FSP) of \eqref{cme:eqn:cme} as a linear ODE,
\begin{equation}
\dfrac{dP(t)}{dt} = AP(t), \qquad A=\sum\limits_{m=1}^M (\mathbf{J}^{\mathbf{z^m}} - \mathbf{J}^0) \diag(w^m) P(t), \qquad P(t) \in \mathbb{R}_+^{\prod_{i=1}^d N_i},
\label{cme:eqn:cme-shift}
\end{equation}
where the multidimensional shift operator reads
\begin{equation*}
 \mathbf{J}^{\mathbf{z}} = J^{z_1} \otimes \cdots \otimes J^{z_d}, \quad \mathbf{J}^{\mathbf{z}}(x_1,\ldots,x_d,x_1',\ldots,x_d') = J^{z_1}(x_1,x_1')  \cdots J^{z_d}(x_d,x_d'),
\end{equation*}
$w^m = \{w^m(\mathbf{x})\}$ and $P(t) = \{P(\mathbf{x},t)\}$, $\mathbf{x} \in \bigotimes\limits_{i=1}^d \{0,\ldots,N_i-1\}$, are the corresponding values of $w^m$ and $P$ stacked into vectors,
$\diag(w^m)$ is a diagonal matrix with the values of $w^m$ stretched along the diagonal, and $\otimes$ means the Kronecker product, or a rank-1 matrix TT format.
Note that $\mathbf{J}^0$ is just an identity matrix of proper sizes.

To employ tensor decompositions, we need to present all initial data in our favorable format.
Assuming the tensor separability of each propensity function $w^{m}$, we obtain immediately from \eqref{cme:eqn:cme-shift} the tensor rank estimate of the CME operator.
\begin{lemma}\label{lem:rank-A-gen}
\begin{equation}
\mathrm{trank}(A) \le \sum_{m=1}^M 2 \cdot \mathrm{trank}(w^{m}).
\label{cme:eqn:rank-A-gen}
\end{equation}
\end{lemma}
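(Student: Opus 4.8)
The plan is to read the bound off directly from the explicit form of $A$ in \eqref{cme:eqn:cme-shift}, using only two elementary facts about matrix TT ranks, both of which follow from the characterization $r_i = \mathrm{rank}(f^{\{i\}})$ combined with standard matrix-rank inequalities applied to the unfoldings. The first is subadditivity under sums: since the $i$-th unfolding of a sum is the sum of the unfoldings, $\mathrm{rank}(X^{\{i\}}+Y^{\{i\}}) \le \mathrm{rank}(X^{\{i\}}) + \mathrm{rank}(Y^{\{i\}})$, so $\mathrm{trank}(X+Y) \le \mathrm{trank}(X) + \mathrm{trank}(Y)$ holds componentwise in the rank tuple, hence also for the maximum. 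The second is submultiplicativity under operator products, $\mathrm{trank}(BC) \le \mathrm{trank}(B)\,\mathrm{trank}(C)$, which likewise follows once the product is written in the matrix TT format \eqref{eq:ttm}: its mode-$i$ core is the contraction of the corresponding cores of $B$ and $C$, whose ranks multiply.

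A second preliminary observation is that $\diag(w^m)$ inherits the tensor rank of $w^m$. Indeed, if $w^m$ has TT factors $w^{m,(i)}(x_i)$ with ranks $\mathrm{\mathbf{trank}}(w^m)$, then the diagonal operator admits matrix TT factors $D^{(i)}(x_i,x_i') = w^{m,(i)}(x_i)\,\delta_{x_i,x_i'}$ with the same ranks, so $\mathrm{trank}(\diag(w^m)) = \mathrm{trank}(w^m)$.

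With these in hand the argument is short. Each Kronecker product $\mathbf{J}^{\mathbf{z^m}} = J^{z_1}\otimes\cdots\otimes J^{z_d}$ is a rank-$1$ matrix TT by the remark following \eqref{eq:ttm}, and $\mathbf{J}^0$ is the identity, also rank $1$; hence $\mathbf{J}^{\mathbf{z^m}} - \mathbf{J}^0$ has $\mathrm{trank} \le 2$ by subadditivity. Multiplying by $\diag(w^m)$ and invoking submultiplicativity together with the previous observation gives $\mathrm{trank}\bigl((\mathbf{J}^{\mathbf{z^m}} - \mathbf{J}^0)\diag(w^m)\bigr) \le 2\,\mathrm{trank}(w^m)$. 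Finally, $A$ is the sum over $m=1,\ldots,M$ of these terms, so one last application of subadditivity yields \eqref{cme:eqn:rank-A-gen}.

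I do not expect a genuine obstacle here; the content is entirely the bookkeeping of TT-rank arithmetic. The only point requiring a little care is to confirm that the two rank inequalities hold componentwise in the tuple before passing to the maximum, and that the factor $2$ is honest rather than an overestimate. One may in fact sidestep submultiplicativity by expanding $(\mathbf{J}^{\mathbf{z^m}} - \mathbf{J}^0)\diag(w^m) = \mathbf{J}^{\mathbf{z^m}}\diag(w^m) - \diag(w^m)$ and noting that each summand has $\mathrm{trank} \le \mathrm{trank}(w^m)$, since a rank-$1$ Kronecker factor does not raise the rank of a product; plain subadditivity of the difference then already supplies the factor $2$.
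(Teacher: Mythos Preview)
Your argument is correct and matches the paper's proof essentially step for step: rank-$1$ Kronecker shifts give a rank-$2$ difference, $\diag(w^m)$ preserves the TT rank of $w^m$, products multiply ranks, and the final sum over $m$ adds them. The only difference is that you spell out the underlying unfolding-rank justifications and offer a minor alternative route at the end, but the substance is identical.
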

\begin{proof}
Both $\mathbf{J}^{\mathbf{z^m}}$ and $\mathbf{J}^{0}$ are (rank-1) Kronecker products.
The difference $\mathbf{J}^{\mathbf{z^m}} - \mathbf{J}^0$ is therefore of rank 2.
Given a separable form $w^m = \sum_{\bm{\alpha}} W^{(1)}_{\alpha_1} \otimes W^{(2)}_{\alpha_1,\alpha_2} \otimes \cdots \otimes W^{(d)}_{\alpha_{d-1}}$, the matrix $\diag(w^m)$ is constructed without changing the rank,
$$
\diag(w^m) = \sum_{\bm{\alpha}} \diag(W^{(1)}_{\alpha_1}) \otimes \diag(W^{(2)}_{\alpha_1,\alpha_2}) \otimes \cdots \otimes \diag(W^{(d)}_{\alpha_{d-1}}).
$$
The product of $\mathbf{J}^{\mathbf{z^m}} - \mathbf{J}^0$ and $\diag(w^m)$ multiplies the ranks, and finally, we sum the terms corresponding to all reactions.
\end{proof}

\begin{remark}
It is also not difficult to estimate tensor ranks of the QTT decomposition of $A$, provided that the QTT ranks of each propensity $w^m(\mathbf{x})$ are limited.
Indeed, each shift matrix $J^{z}$ possesses a rank-2 QTT representation \cite{khkaz-conv-2013}.
Therefore, in the QTT format it holds $\mathrm{trank}(A) \le \sum_{m=1}^M 3 \cdot \mathrm{trank}(w^{m})$.
In turn, if $w^m$ is a direct product of smooth functions, it can be approximated by a degree-$p$ polynomial interpolation, with the rank-$(p+1)$ QTT representation.
In particular, such an approach was applied in \cite{kkns-cme-theory-2013} to certain classes of separable CME models.
\end{remark}

\subsection{Reversible monomolecular reactions} \label{sec:z=1}
A case of special interest is the reactions of form $\emptyset \leftrightarrows S_i$,
that is a simple creation/destruction of specie molecules.
In this case, each $\mathbf{z^m}$ contains only $+1$ or $-1$ at just one position, and $\sum\limits_{m=1}^M z^m_i = 0,~i=1,\ldots,d$.
Such reactions appear frequently in gene regulatory networks, such as switches, cascades, etc. (see below).
We have thus $M=2d$ reactions, and \eqref{cme:eqn:cme-shift} can be separated into two parts, corresponding respectively to the \emph{creation} and \emph{destruction} of specie molecules,
\begin{equation}
\dfrac{dP(t)}{dt} = \sum\limits_{m=1}^d (\mathbf{J}^{m+} - \mathbf{J}^0) \diag(w^{m+}) P(t) + \sum\limits_{m=1}^d (\mathbf{J}^{m-} - \mathbf{J}^0) \diag(w^{m-}) P(t),
\label{cme:eqn:cme_cr_destr}
\end{equation}
$w^{m+}$ is the propensity corresponding to the creation reactions with $\mathbf{z^m} \ge 0$, $w^{m-}$ corresponds to $\mathbf{z^m} \le 0$, and
$$
\mathbf{J}^{m+} = \underbrace{J^0 \otimes \cdots \otimes J^{-1}}_{m} \otimes J^0 \cdots \otimes J^0, \quad
\mathbf{J}^{m-} = \underbrace{J^0 \otimes \cdots \otimes J^{1}}_{m} \otimes J^0 \cdots \otimes J^0.
$$
Thus the shift operators in each part can be collected into separable difference operators, acting only on $x_m$,
\begin{equation}
 \nabla^{-}_m = \underbrace{J^0 \otimes \cdots \otimes (J^0-J^{-1})}_{m} \otimes J^0 \cdots \otimes J^0, \quad
 \nabla^{+}_m = \underbrace{J^0 \otimes \cdots \otimes (J^{1}-J^0)}_{m} \otimes J^0 \cdots \otimes J^0.
 \label{cme:eqn:grads}
\end{equation}
Now, the CME operator $A$ \eqref{cme:eqn:cme-shift} factorizes as
\begin{equation}
A = A^+ + A^-, \qquad
A^+ = -\sum\limits_{m=1}^d \nabla_m^{-} \diag(w^{m+}), \quad A^- = \sum\limits_{m=1}^d \nabla^{+}_m \diag(w^{m-}),
 \label{cme:eqn:cme_grads}
\end{equation}
which has a very close form to the diffusion-convection equation discretized using the finite difference scheme.

The general Lemma  \ref{lem:rank-A-gen} can be applied, giving the tensor rank bound proportional to $d$.
However, we would like to consider typical gene networks in more detail and prove refined results.

\subsection{Signaling cascade genetic model}
A cascade process occurs when adjacent genes produce proteins which influence on the expression of a succeeding gene, see Fig. \ref{fig:cascade}.
This is a typical model in genetic networks; as an example, the lytic phase of the $\lambda$-phage system \cite{ptashne-switch-1992} can be considered.
A mutually repressing gene pair, or gene toggle (Fig. \ref{fig:toggle}), that can be found in such systems, is also a case of the cascade model, with the dimension $2$.
\begin{figure}[h!]
\begin{minipage}[t]{0.49\linewidth}
\centering
\caption{Cascade signaling network}
\label{fig:cascade}
\begin{tikzpicture}
\node [circle, draw] (s1) at (0,0) {$S_1$};
\node [circle, draw] (s2) at ($(s1)+(1.5,0)$) {$S_2$};
\node  (ccc) at ($(s2)+(1.2,0)$) {$\cdots$};
\node [circle, draw] (sd) at ($(ccc)+(1.2,0)$) {$S_d$};

\draw[sloped,semithick,->] (s1) to [out=60,in=120] (s2);
\draw[sloped,semithick,->] (s2) to [out=60,in=120] (ccc);
\draw[sloped,semithick,->] (ccc) to [out=60,in=120] (sd);
\draw[sloped,semithick,->] (s1) to [out=-135,in=180] ($(s1)+(0,-0.8)$) to [out=0,in=-45] (s1);
\draw[sloped,semithick,->] (s2) to [out=-135,in=180] ($(s2)+(0,-0.8)$) to [out=0,in=-45] (s2);
\draw[sloped,semithick,->] (sd) to [out=-135,in=180] ($(sd)+(0,-0.8)$) to [out=0,in=-45] (sd);
\end{tikzpicture}
\end{minipage}
\hfill
\begin{minipage}[t]{0.49\linewidth}
\centering
\caption{Toggle switch}
\label{fig:toggle}
\begin{tikzpicture}
\node [circle, draw] (s1) at (0,0) {$S_1$};
\node [circle, draw] (s2) at ($(s1)+(1.5,0)$) {$S_2$};

\draw[sloped,semithick,->] (s1) to [out=60,in=120] (s2);
\draw[sloped,semithick,->] (s2) to [out=-120,in=-60] (s1);
\draw[sloped,semithick,->] (s1) to [out=-180,in=135] ($(s1)+(-0.5,-0.6)$) to [out=-45,in=-90] (s1);
\draw[sloped,semithick,<-] (s2) to [out=-90,in=-135] ($(s2)+(0.5,-0.6)$) to [out=45,in=0] (s2);
\end{tikzpicture}
\end{minipage}
\end{figure}
In Fig. \ref{fig:cascade}, \ref{fig:toggle}, the arrows denote feedbacks between species arising from the propensity rates $w^m$: an arrow going from the $i$-th to $j$-th specie means that the rate of the reaction involving the $j$-th specie depends on the copy number of the $i$-th one.
For example, in the toggle switch, the creation rate of each protein depends inversely proportional on the concentration of another protein, whereas the destruction rate depends only on the concentration of the degrading protein itself.

In such cases, when the $m$-th destruction propensity depends only on $x_m$, its rank-1 decomposition reads
$$
w^{m-}(\mathbf{x}) = e_1(x_1)  \cdots \hat w^{m-}(x_m) \cdots e_d(x_d),
$$
where $e_i(x_i)=1~\forall x_i=0,\ldots,N_i-1$.
Now, the destruction part of the operator, $A^-$ in \eqref{cme:eqn:cme_grads}, has the Laplace-like form
\begin{equation}
A^{-} = D_1 \otimes J^0 \cdots \otimes J^0 + \cdots + J^0 \otimes \cdots \otimes D_d, \quad D_m = (J^1-J^0) \diag(\hat w^{m-}),
\label{eq:A_destr_lapl}
\end{equation}
which is proven to have the TT rank 2 \cite{khkaz-lap-2012}.

The creation part is usually more complicated (it contains feedbacks between species), and depends on several variables.
In the cascade networks (including also the toggle switch), the $m$-th creation propensity depends
on $x_{m-1}$ (or $x_{m+1}$), and, probably, on $x_m$.
Thus, the corresponding operator part sums the two-variate terms,
\begin{equation}
A^{+} = D^1_1 \otimes J^0 \cdots \otimes J^0 + D^2_1 \otimes D^2_2 \otimes J^0 \cdots \otimes J^0 + \cdots + J^0 \cdots \otimes D^d_{d-1} \otimes D^d_d,
\label{eq:A_creat_casc}
\end{equation}
where $D^m_{m-1} = \diag(\hat w^{m+})$, $D^m_m = -(J^0-J^{-1})$ (in a simpler case when $w^{m+}$ does not depend on $x_m$).
For example, the Michaelis-Menten rate in a cascade reads $w^{m+}(\mathbf{x}) = e_1(x_1) \cdots  \hat w^{m+}(x_{m-1}) \cdot e_m(x_m)  \cdots e_d(x_d)$, $\hat w^{m+}(x_{m-1}) = \frac{\alpha}{\beta+x_{m-1}}$.
A generalization will be given in Remark \ref{cme:rem:cascadic_rank>1}.

In the rest of this section, we are going to discuss the structure of TT factors.
Contrarily to the $\mathcal{O}(d)$ canonical rank, provided by Lemma \ref{lem:rank-A-gen}, the following Lemma \ref{cme:lem:cascadic} shows that a sum like $A^{+}$ can be represented via a rank-3 TT decomposition, thus with the linear in $d$ memory cost.
The Matrix Product form \eqref{eq:tt2} is especially convenient for such analysis, but the variables $x_i,x_i'$ would take too much place, and we omit them for brevity, in the same way as in Alg. \ref{alg:amen}.
That is, by $E_i,~F_i^{i},~F^{i+1}_{i}$ we will mean not the whole matrices, but the elements $E_i(x_i,x_i'),~F_i^{i}(x_i,x_i'),~F^{i+1}_{i}(x_i,x_i')$.

\begin{lemma}
\label{cme:lem:cascadic}
Given the matrices $E_i,~F_i^{i},~F^{i+1}_{i}$.
The cascadic sum
\begin{equation}
H(\mathbf{x},\mathbf{x'}) = F^1_1  \left(\prod\limits_{k=2}^{d} E_k \right) + \sum\limits_{i=2}^d \left(\prod\limits_{k=1}^{i-2} E_k \right) \cdot F^i_{i-1} \cdot F^i_i \cdot \left(\prod\limits_{k=i+1}^{d} E_k \right)
\label{cme:eqn:cascadic}
\end{equation}
possesses an explicit exact rank-3 TT decomposition $H(\mathbf{x},\mathbf{x'}) = H^{(1)} \cdots H^{(d)}$, where
\begin{equation}
H^{(1)} = \begin{bmatrix}
     E_1 & F^2_1 & F^1_1
    \end{bmatrix}, \quad
H^{(d)} = \begin{bmatrix}
 0 \\
 F^d_d \\
 E_d
\end{bmatrix}, \quad
H^{(i)} = \begin{bmatrix}
E_i & F^{i+1}_i & 0 \\
0 & 0 & F^i_i \\
0 & 0 & E_i
\end{bmatrix},
\label{cme:eqn:casc_inner_block}
\end{equation}
if $i=2,\ldots,d-1.$
For the Tucker decomposition the same rank-3 bound holds.
\end{lemma}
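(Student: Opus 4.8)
The plan is to verify directly that the ordered product of cores $H^{(1)}H^{(2)}\cdots H^{(d)}$, interpreted as in \eqref{eq:tt2}, reproduces the cascadic sum \eqref{cme:eqn:cascadic}. Here each $H^{(i)}$ is read as an $r_{i-1}\times r_i$ block matrix whose entries are the operators $E_i,F^i_i,F^{i+1}_i$ acting on the physical pair $(x_i,x_i')$; contracting the auxiliary indices $\beta_{i-1},\beta_i$ amounts to ordinary block matrix multiplication, while the physical indices combine by the Kronecker product. Because the three slots of the inner block \eqref{cme:eqn:casc_inner_block} have a transparent meaning, I would organize the computation around the left partial products $P_i:=H^{(1)}\cdots H^{(i)}$, a row of three operators on the first $i$ factors with components $P_i[1],P_i[2],P_i[3]$.

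The key step is to prove, by induction on $i$, the closed form
\[
P_i[1]=E_1\otimes\cdots\otimes E_i,\qquad P_i[2]=E_1\otimes\cdots\otimes E_{i-1}\otimes F^{i+1}_i,
\]
\[
P_i[3]=F^1_1\otimes E_2\otimes\cdots\otimes E_i+\sum_{j=2}^{i}\left(E_1\otimes\cdots\otimes E_{j-2}\right)\otimes F^j_{j-1}\otimes F^j_j\otimes\left(E_{j+1}\otimes\cdots\otimes E_i\right).
\]
The interpretation guiding the bookkeeping is that $P_i[1]$ is the all-$E$ prefix that may still open a new term, $P_i[2]$ records a feedback pair whose first factor $F^{i+1}_i$ has just been placed at position $i$ and whose partner is expected at position $i+1$, and $P_i[3]$ accumulates exactly those cascadic terms already completed within the first $i$ factors, padded on the right by $E$'s. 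The base case $i=1$ reads $P_1=H^{(1)}=[\,E_1\ \ F^2_1\ \ F^1_1\,]$, in agreement with the formula once empty Kronecker products are read as scalars.

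For the inductive step (with $i+1\le d-1$, so that $H^{(i+1)}$ is a generic inner core) I would multiply $P_i$ by $H^{(i+1)}$ slot by slot: the top row of \eqref{cme:eqn:casc_inner_block} feeds $P_i[1]$ into $P_{i+1}[1]=P_i[1]\otimes E_{i+1}$ and $P_{i+1}[2]=P_i[1]\otimes F^{i+2}_{i+1}$, while the third output slot collects $P_{i+1}[3]=P_i[2]\otimes F^{i+1}_{i+1}+P_i[3]\otimes E_{i+1}$. The first summand is precisely the newly completed term $j=i+1$, and the second appends a trailing $E_{i+1}$ to every previously completed term, which is exactly the closed form at level $i+1$. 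Contracting finally with the last core $H^{(d)}=[\,0\ \ F^d_d\ \ E_d\,]^\top$ annihilates $P_{d-1}[1]$, turns $P_{d-1}[2]\otimes F^d_d$ into the last term $i=d$ of \eqref{cme:eqn:cascadic}, and turns $P_{d-1}[3]\otimes E_d$ into all remaining terms; summing gives $H$ exactly. The degenerate dimensions $d=1,2$ are checked by inspection.

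For the Tucker bound I would argue that the $k$-th single-mode unfolding of $H$ has its column space spanned by the operators that can occupy position $k$ among all Kronecker terms of \eqref{cme:eqn:cascadic}, which for an interior mode is the three-element set $\{E_k,\ F^{k+1}_k,\ F^k_k\}$ (and a proper subset at $k=1,d$). Hence every single-mode rank is at most $3$, so the Tucker rank is bounded by $3$, with the core being the sparse $3\times\cdots\times3$ tensor that selects these $d$ admissible factor combinations. The only real difficulty is bookkeeping rather than ideas: one must pin down the off-by-one indexing by which the first feedback factor $F^i_{i-1}$ sits at position $i-1$ while its partner $F^i_i$ sits at position $i$, and state the invariant for $P_i[3]$ with exactly the right index ranges, so that the newly completed term splits off cleanly in the inductive step and the boundary cores match at $i=1$ and $i=d$.
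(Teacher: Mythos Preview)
Your proof is correct. The approach differs from the paper's in a natural way: since the lemma already states the explicit cores, you verify the product $H^{(1)}\cdots H^{(d)}$ by forward induction on the left partial products $P_i$, keeping a clear invariant for the three slots and closing with $H^{(d)}$. The paper instead \emph{derives} the cores, peeling off one factor at a time from the right-hand side in the spirit of the TT-SVD: it writes $H=H^{(1)}H_2$, then recursively $H_i=H^{(i)}H_{i+1}$ with the three-row block \eqref{cme:eqn:casc_rec_block}, until the last two factors split. Your route is the cleaner verification once the cores are given; the paper's route explains where the block structure \eqref{cme:eqn:casc_inner_block} comes from and makes the rank bound appear as the number of linearly independent rows at each split. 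For the Tucker part, your mode-$k$ unfolding argument (the column space is spanned by $\{E_k,F^{k+1}_k,F^k_k\}$) is essentially the same observation the paper makes when it counts the distinct elements in each TT factor and invokes the TT-to-Tucker construction.
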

\begin{proof}
 We begin to split the dimensions recurrently, extracting the linearly independent elements, in the same way as in the TT-SVD algorithm \cite{osel-tt-2011}.
 So, the first step reads
$$
H(\mathbf{x},\mathbf{x'}) = \begin{bmatrix}
     E_1 & F^2_1 & F^1_1
    \end{bmatrix}
    \begin{bmatrix}
     F^3_2  F^3_3  \cdots  E_d + \cdots + E_2  \cdots  F^d_{d-1}  F^d_d\\
     F^2_2  E_3  \cdots  E_d \\
     E_2  \cdots  E_d
    \end{bmatrix}.
$$
The first term here is exactly the first TT factor of the decomposition.
Now, suppose we have the following form,
\begin{equation}
H_i(x_i,\ldots,x_d,x_i',\ldots,x_d') = \begin{bmatrix}
F^{i+1}_i  F^{i+1}_{i+1}  \cdots  E_d + \cdots + E_i  \cdots  F^d_{d-1}  F^d_d \\
F^i_i  E_{i+1}  \cdots  E_d \\
E_i  \cdots  E_d
\end{bmatrix}.
\label{cme:eqn:casc_rec_block}
\end{equation}
We split the $i$-th dimension from each row in the same manner,
$$
H_i(x_i,\ldots,x_d,x_i',\ldots,x_d') = \begin{bmatrix}
E_i & F^{i+1}_i & 0 \\
0 & 0 & F^i_i \\
0 & 0 & E_i
\end{bmatrix}
\begin{bmatrix}
  F^{i+2}_{i+1}  F^{i+2}_{i+2}  \cdots  E_d + \cdots + E_{i+1}  \cdots  F^d_{d-1}  F^d_d \\
  F^{i+1}_{i+1}  E_{i+2}  \cdots  E_d \\
  E_{i+1}  \cdots  E_d
\end{bmatrix},
$$
and derive the $i$-th TT factor (the first term).
The rest dimensions are presented in the same form as \eqref{cme:eqn:casc_rec_block}, so we can continue the process. The last two factors are separated as follows,
$$
\begin{bmatrix}
F^d_{d-1}  F^d_d \\
F^{d-1}_{d-1}  E_d \\
E_{d-1}  E_d
\end{bmatrix} =
\begin{bmatrix}
 F^{d}_{d-1} & 0 \\
 0 & F^{d-1}_{d-1} \\
 0 & E_{d-1}
\end{bmatrix}
\begin{bmatrix}
 F^d_d \\
 E_d
\end{bmatrix} =
\begin{bmatrix}
 E_{d-1} & F^{d}_{d-1} & 0 \\
 0 & 0 & F^{d-1}_{d-1} \\
 0 & 0 & E_{d-1}
\end{bmatrix}
\begin{bmatrix}
 0 \\
 F^d_d \\
 E_d
\end{bmatrix}.
$$
We see that all the TT ranks are equal to 3, which confirms the claim of the lemma.
To obtain the Tucker rank estimate, it is sufficient to note that each TT factor contains only 3 independent elements, and follow the TT-to-Tucker procedure described in \cite{dk-qtt-tucker-2013}.
\end{proof}

\begin{remark}
\label{cme:rem:cascadic_rank>1}
In \eqref{cme:eqn:cascadic}, each summand is a rank-1 tensor. However, we can straightforwardly generalize it to the case, when the neighboring terms are summed from several components,
$$
\sum\limits_{\alpha_i=1}^{r_i} F^i_{i-1,\alpha_i}(x_{i-1},x_{i-1}') \cdot F^i_{i,\alpha_i}(x_i,x_i') \quad  \mbox{instead of} \quad  F^i_{i-1}(x_{i-1},x_{i-1}') \cdot F^i_i(x_{i},x_{i}'),
$$
i.e., the TT rank of each propensity is not equal to 1.
In this case, we can collect respectively the row and column vectors
$$
F^i_{i-1}(x_{i-1},x_{i-1}') = \begin{bmatrix} F^i_{i-1,1}(x_{i-1},x_{i-1}') \\ \vdots \\  F^i_{i-1,r_i}(x_{i-1},x_{i-1}')\end{bmatrix}^\top, \quad
F^i_i(x_i,x_i') = \begin{bmatrix}F^i_{i,1}(x_i,x_i') \\ \vdots \\ F^i_{i,r_i}(x_i,x_i')\end{bmatrix},
$$
and the constructions \eqref{cme:eqn:casc_inner_block} will be considered as block matrices, with
the sizes (i.e. the TT ranks) $(2+r_i) \times (2+r_{i+1})$.
Counting the linearly independent elements in each TT factor, we conclude that the $i$-th Tucker rank is bounded by $1+r_i+r_{i+1}$.
\end{remark}

\begin{corollary}
\label{cor:neigh+diag}
Summing the rank-3 creation part \eqref{eq:A_creat_casc} and the rank-2 destruction part \eqref{eq:A_destr_lapl}, we conclude that the CME operator in the cascade model admits an explicit exact TT decomposition of rank 5.
\end{corollary}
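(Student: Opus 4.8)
The plan is to assemble an explicit rank-$5$ TT decomposition of $A=A^++A^-$ out of the two component trains, relying on the elementary fact that TT ranks are subadditive under a sum. The creation part $A^+$ of \eqref{eq:A_creat_casc} is exactly the cascadic sum \eqref{cme:eqn:cascadic} under the identifications $E_k=J^0$ and $F^1_1=D^1_1,\ F^i_{i-1}=D^i_{i-1},\ F^i_i=D^i_i$, so Lemma \ref{cme:lem:cascadic} already hands me its explicit rank-$3$ cores $H^{(1)},H^{(i)},H^{(d)}$ of \eqref{cme:eqn:casc_inner_block}. It therefore remains only to write the destruction part $A^-$ in an explicit rank-$2$ form and then concatenate the two trains.

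First I would put the Laplace-like operator $A^-$ of \eqref{eq:A_destr_lapl} into its canonical rank-$2$ TT form,
\begin{equation*}
L^{(1)} = \begin{bmatrix} D_1 & J^0 \end{bmatrix}, \quad
L^{(i)} = \begin{bmatrix} J^0 & 0 \\ D_i & J^0 \end{bmatrix}, \quad
L^{(d)} = \begin{bmatrix} J^0 \\ D_d \end{bmatrix},
\end{equation*}
where $D_m=(J^1-J^0)\diag(\hat w^{m-})$ and $J^0$ is the identity. A one-line induction on $d$ shows that the contracted product $L^{(1)}\cdots L^{(d)}$ telescopes to $\sum_m J^0\otimes\cdots\otimes D_m\otimes\cdots\otimes J^0 = A^-$: the first rank channel accumulates the running partial sum while the second propagates the pure-identity chain that seeds each new single-site term $D_m$.

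Then I would glue the two trains by the standard block rule for a sum: stacking the cores of $A^+$ and $A^-$ as
\begin{equation*}
G^{(1)} = \begin{bmatrix} H^{(1)} & L^{(1)} \end{bmatrix}, \quad
G^{(i)} = \begin{bmatrix} H^{(i)} & 0 \\ 0 & L^{(i)} \end{bmatrix}, \quad
G^{(d)} = \begin{bmatrix} H^{(d)} \\ L^{(d)} \end{bmatrix}
\end{equation*}
produces internal rank indices of size $3+2=5$. Multiplying out $G^{(1)}\cdots G^{(d)}$, the block-diagonal middle cores keep the creation and destruction channels completely decoupled, so the product collapses to $H^{(1)}\cdots H^{(d)} + L^{(1)}\cdots L^{(d)} = A^++A^- = A$, with every TT rank equal to $5$.

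I expect no genuine obstacle here. The only thing that needs checking, and it is immediate, is that the off-diagonal zero blocks of each $G^{(i)}$ forbid any cross term between the two channels under block-matrix multiplication, which is precisely what makes the product split as a clean sum. The matching Tucker bound of $5$ then follows exactly as in Lemma \ref{cme:lem:cascadic}, by counting the bounded number of linearly independent entries in each glued core. Since the statement asks only for an explicit representation of rank $5$, and nothing about sharpness of that rank, the block concatenation above is the whole argument.
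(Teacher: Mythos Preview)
Your argument is correct and is exactly the route the paper takes: the corollary is stated without proof, relying on the standard subadditivity of TT ranks under sums (rank $3$ from Lemma~\ref{cme:lem:cascadic} plus rank $2$ for the Laplace-like destruction part cited from \cite{khkaz-lap-2012}). You have simply made explicit the rank-$2$ cores for $A^-$ and the block-diagonal concatenation that the paper leaves implicit, which is fine and entirely in the same spirit.
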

\begin{remark}
Using the technique of linear dependence elimination, employed in Lemma \ref{cme:lem:cascadic}, one can show that in fact $\mathrm{trank}(A)=4$.
\end{remark}
%

\subsection{Application to spin models}
The similar operators arise also in the one-dimensional spin systems modeling with nearest neighbor interactions. For example, the Heisenberg (XYZ) model with open boundary conditions \cite{huckle-networks-2013,huckle-symmetries-2013}, acting on $\bigotimes\limits_{i=1}^d \mathbb{C}^2$, reads
$$
\begin{array}{rcl}
H & = & j_x H_{xx} + j_y H_{yy} + j_z H_{zz} + \lambda H_x, \\
H_{\mu\nu}(\mathbf{x},\mathbf{x'}) & = & \sum\limits_{i=2}^d \left(\prod\limits_{k=1}^{i-2} E_k \right) \cdot P_{\mu} \cdot P_{\nu} \cdot  \left(\prod\limits_{k=i+1}^{d} E_k \right), \\
H_{\mu}(\mathbf{x},\mathbf{x'}) & = & \sum\limits_{i=1}^d \left(\prod\limits_{k=1}^{i-1} E_k \right) \cdot P_{\mu} \cdot  \left(\prod\limits_{k=i+1}^{d} E_k \right),
\end{array}
$$
where $E_k$ are the identity matrices, $P_{\mu}$ are the Pauli matrices ($\mu=x,y,z$),
$$
P_x = \begin{pmatrix}0 & 1 \\ 1 & 0\end{pmatrix}, \quad P_y = \begin{pmatrix}0 & -\mathrm{i} \\ \mathrm{i} & 0\end{pmatrix}, \quad P_z = \begin{pmatrix}1 & 0 \\ 0 & -1\end{pmatrix},
$$
and $j_{\mu}$, $\lambda$ are scalars.
Lemma \ref{cme:lem:cascadic} and Remark \ref{cme:rem:cascadic_rank>1} can be applied straightforwardly,
giving the following rank estimate.
\begin{lemma}
The Heisenberg (XYZ) Hamiltonian admits an explicit rank-7 TT (or Tucker) representation (note that the TT and QTT formats are indistinguishable here since each Pauli matrix is $2 \times 2$).
\end{lemma}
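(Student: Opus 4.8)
The plan is to split the Hamiltonian into its nearest-neighbor interaction and its single-site field, bound each piece with the machinery already in place, and add the two tensor-train representations, exactly mirroring Corollary \ref{cor:neigh+diag}.

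First I would write $H = H_{\mathrm{int}} + \lambda H_x$ with $H_{\mathrm{int}} = j_x H_{xx} + j_y H_{yy} + j_z H_{zz}$, and observe that $H_{\mathrm{int}}$ has precisely the cascadic shape \eqref{cme:eqn:cascadic}: the leading single-site term is absent (so $F^1_1 = 0$), the diagonal blocks $E_k$ are the $2\times 2$ identities, and at each interface the coupling does not factor into a single product but into the sum
\[
F^i_{i-1} \cdot F^i_i = j_x (P_x)_{i-1}(P_x)_i + j_y (P_y)_{i-1}(P_y)_i + j_z (P_z)_{i-1}(P_z)_i
\]
of $r_i = 3$ rank-one terms. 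This is exactly the generalization treated in Remark \ref{cme:rem:cascadic_rank>1}, so Lemma \ref{cme:lem:cascadic} gives a TT representation with block sizes $(2+r_i)\times(2+r_{i+1}) = 5\times 5$, i.e. $\mathrm{trank}(H_{\mathrm{int}}) \le 5$, together with the Tucker bound $1 + r_i + r_{i+1} = 7$.

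Next I would note that the field $H_x = \sum_{i=1}^d (P_x)_i$ places a single $P_x$ on one site and the identity elsewhere, so it is a Laplace-like sum of the form \eqref{eq:A_destr_lapl} and has TT rank $2$ (the scalar $\lambda$ does not affect the rank). Summing the two representations then gives $\mathrm{trank}(H) \le 5 + 2 = 7$, the asserted bound. For the Tucker format the same bound holds: the interaction part already yields the Tucker bound $7$, and the field contributes only the local operator $P_x$, which is among the local operators (the identity and the three Pauli matrices) already present in $H_{\mathrm{int}}$, so no new independent slices arise at any mode.

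Since Lemma \ref{cme:lem:cascadic} and Remark \ref{cme:rem:cascadic_rank>1} do all the structural work, I expect no genuine difficulty; the main point to get right is the bookkeeping — checking that the three couplings $xx$, $yy$, $zz$ really collapse into the single generalized interface with $r_i \equiv 3$, and that the interaction and field cores add to rank $5+2$ without further growth. The coincidence of the QTT and TT bounds is immediate here, because each mode already has dimension $N = 2$ and so admits no further binary quantization.
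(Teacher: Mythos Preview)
Your proposal is correct and follows essentially the same route as the paper: split $H$ into the nearest-neighbor interaction (handled by Lemma~\ref{cme:lem:cascadic} and Remark~\ref{cme:rem:cascadic_rank>1} with $r_i=3$, giving TT rank $2+3=5$) plus the Laplace-like field $\lambda H_x$ of TT rank $2$, and add to obtain $5+2=7$. Your Tucker discussion is slightly more explicit than the paper's, but the argument is the same in substance.
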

\begin{proof}
Assembling the factors
$$
F^i_{i-1}  =  \begin{bmatrix} j_x P_x & j_y P_y & j_z P_z \end{bmatrix}, \quad F^i_i = \begin{bmatrix}P_x & P_y & P_z\end{bmatrix}^{\top},
$$
we reduce the problem to that is described by Lemma \ref{cme:lem:cascadic} and Remark \ref{cme:rem:cascadic_rank>1}, with $r_i=3$.
Since $\lambda H_x$ is similar to \eqref{eq:A_destr_lapl}, with $\mathrm{trank}(H_x)=2$, the total estimate writes $(2+3)+2=7$.
\end{proof}
If some of $j_x,j_y,j_z$ are equal to zero, the reduced models appear,
for example, the Heisenberg (XY) Hamiltonian with $j_z=0$ and TT rank 6, or Ising (ZZ) model with $j_x=j_y=0$ and TT rank 5.

\section{Computational scheme} \label{sec:comput}
\subsection{Solving the dynamical problem}\label{sec:dyn_solution}
An exact solution of the CME \eqref{cme:eqn:cme-shift} is written as $P(t) = \exp(At)P(0)$.
However, neither the whole matrix $\exp(At)$ nor even a vector $\exp(At)P(0)$ for a large $t$ is feasible to compute for such a high-dimensional problem.

Instead, the time interval of interest $[0,T]$ may be split into smaller subintervals \\
$[0,T_0],~[T_0,2T_0],\ldots,[T-T_0,T],$ $T_0=T/Q,$
in each of them the initial value problem
$$
\dfrac{dP(t)}{dt} = AP(t), \qquad t \in \left[(q-1)T_0, qT_0\right], \quad q=1,\ldots,Q
$$
is solved using some cheaper scheme.
For example, if $T_0$ is small enough, the implicit Crank-Nicolson time integration may be applied,
\begin{equation}
\left(\mathbf{J}^0-\frac{T_0}{2}A\right)P(qT_0) = \left(\mathbf{J}^0+\frac{T_0}{2}A\right) P((q-1)T_0), \quad q=1,\ldots,Q,
\label{cme:eqn:cranc}
\end{equation}
where $\mathbf{J}^0$ is the identity matrix of the same sizes as $A$ (cf. \eqref{cme:eqn:shift_matrices}).
This scheme is known to be second order accurate, i.e.
$$
\|\exp(A T_0) P((q-1)T_0) - P(q T_0)\| = \mathcal{O}(T_0^3).
$$
However, one issue arising in complex biological system modeling is the long time integration, such that the time scale $T$ is of the order of $100-1000$, and the successive integration using Formula \eqref{cme:eqn:cranc} requires too many steps for a reasonable accuracy (e.g. $10^{-4}$) and corresponding small $T_0$.

Therefore, one needs a balance: $T_0$ is worth to be increased, but the integration scheme on $\left[(q-1)T_0, qT_0\right]$ should be more accurate.
The latter goal may be achieved by further splitting of the range $T_0$ into $N_t$ smaller steps of size $\tau=T_0/N_t$, such that $\mathcal{O}(\tau^2)$ accuracy of the Crank-Nicolson scheme is satisfactory.
Contrarily to the straightforward propagation \eqref{cme:eqn:cranc}, we now aggregate these $\tau$-steps into one global linear system  \cite{DKhOs-parabolic1-2012},
\begin{equation}
\begin{bmatrix}(J_{N_t}^{0}-J_{N_t}^{-1}) \otimes \mathbf{J}^0 - \frac{\tau}{2} (J_{N_t}^{0}+J_{N_t}^{-1}) \otimes A \end{bmatrix} P =
\delta_1 \otimes \begin{pmatrix}P(t_{0})+\frac{\tau}{2} A P(t_{0})\end{pmatrix},
\label{cme:eqn:global_time}
\end{equation}
where $P = \{P(t_n)\}_{n=1}^{N_t} \in \mathbb{R}_+^{(\prod_{i=1}^d N_i)N_t}$ is the vector of all time snapshots at $t_n=(q-1)T_0+\tau n$ stacked,
$J^{-1}_{N_t} \in \mathbb{R}^{N_t \times N_t}$ is the down shift matrix according to \eqref{cme:eqn:shift_matrices}, $J_{N_t}^{0}$ is the identity matrix of size $N_t$, and $\delta_1$ is its first column.
Choosing $N_t=2^L$, this system can be approximated either in the linear QTT or the QTT-Tucker formats, with the quantization in both the state and time variables.
Provided the solution on $\left[(q-1)T_0, qT_0\right]$ is smooth enough, the tensor ranks of $P$ remain moderate, thus the complexity becomes logarithmic w.r.t. the number of steps.
%
Computational benefits of the global scheme w.r.t. the traditional time stepping one have been demonstrated in \cite{DKhOs-parabolic1-2012}.
See also \cite{schwab-spacetime-2009,andreev-bpx-2012} for other versions of the simultaneous space-time discretization approach.


As the linear solver to \eqref{cme:eqn:global_time} in the structured tensor format, we employ
the AMEn algorithm, see Section \ref{sec:amrs}.

\subsection{Computing the steady state}\label{sec:stationary}
In some cases, the transient processes in the CME model are not of interest, and only an approximation to the stationary distribution is required.
In this case, the accurate block formulation \eqref{cme:eqn:global_time} is superfluous.

Instead, we use the implicit Euler iteration,
\begin{equation}
\left(I-T_0 A\right)P(t_{q}) = P(t_{q-1}), \quad q=1,\ldots,Q,
\label{cme:eqn:euler}
\end{equation}
solving the linear system in the left hand side via e.g. Alg. \ref{alg:amen}.
The intermediate solutions approximate the transient processes poorly, but as soon as the method converges, it recovers a component in the lowest eigenspace of $A$ accurately.
As an additional cost reduction, we can use the following trick.
The lowest eigenvalue is of the order of the FSP error, i.e. we may consider it to be zero within the desired accuracy.
So, compute the (normalized) residual as $\eta = \frac{||A P||}{||P||}$.
If $\eta$ is large, we do not need to solve \eqref{cme:eqn:euler} very precisely.
When $\eta$ diminishes, the accuracy may be improved.
Practically, a rule of the form $\eps = c\eta$ (e.g. $c=10^{-1}$), where $\eps$ is the tensor rounding and solution tolerance, is used.
This approach decreases the complexity of intermediate iterations significantly.

In the following we will refer to this method as the (implicit) Euler iterations.
No ambiguity arises since the global state-time formulation \eqref{cme:eqn:global_time} uses the Crank-Nicolson scheme.

As was discussed in Section \ref{sec:amrs}, both the lowest eigenvalue problem, as well as the implicit time scheme solution may be attacked using the traditional Krylov methods,
replacing the standard algebraic operations in the full space by their counterparts in terms of the tensor representations,
but in most cases the TT ranks of the Krylov vectors grow rapidly, and the implicit Euler with the AMEn linear solver overcome this approach.

\section{Numerical experiments}\label{sec:numerics}
The experiments were conducted on a Linux x86\_64 machine with Intel Xeon E5504 processor at 2.00 GHz with the cache size 4096 KB/core.
The Alternating Minimal Energy method (TT and QTT-Tucker versions) for the linear solution and approximation was implemented in MATLAB as a part of the TT-Toolbox\footnote{http://github.com/oseledets/TT-Toolbox} with the most time consuming routines called externally from C/FORTRAN MEX files.

\subsection{20-dimensional signaling cascade}\label{sec:numer_20d}
First, we test the simplest but high-dimensional cascade problem from \cite{hegland-cme-2007,Ammar-cme-2011},
for which we have provided a theoretical analysis of the operator separability.
The model parameters are fixes as follows:
\begin{itemize}
 \item $d=20$, $M=40$;
 \item for $m=1$: $w^{m+}(\mathbf{x}) = 0.7$, $\mathbf{z^{m+}} = -\delta_m$: generation of the first protein;
 \item for $m>1$: $w^{m+}(\mathbf{x}) = \dfrac{x_{m-1}}{5+x_{m-1}}$, $\mathbf{z^{m+}} = -\delta_m$: succeeding creation reactions;
 \item for $m=1,\ldots,20$: $w^{m-}(\mathbf{x}) = 0.07 \cdot x_m$, $\mathbf{z^{m-}} = \delta_m$: destruction reactions.
\end{itemize}
The notations $m+$, $m-$, as well as the operator assembly are according to \eqref{cme:eqn:cme_grads}, and $\delta_m$ is the $m$-th identity vector (in practice one can use the Corollary \ref{cor:neigh+diag} directly).

The computational scheme specifications are the following.
\begin{itemize}
 \item Computational domain $\mathbf{x} \in \{0,\ldots,63\}^{\otimes d}$. The PDF value at $x_i=63$ is below the machine precision.
 \item Linear QTT format for state and time. Both $N$ and linear QTT ranks in this example are small enough, such that the QTT-Tucker format does not bring a further speed-up (which is not the case in the next examples though).
 \item The dynamical problem is solved until $T=400$ via the restarted global state-time solver as proposed in Section \ref{sec:dyn_solution}. We perform an additional test to find an optimal parameter $T_0$.
 \item The initial state is $P(0) = \delta_1 \otimes \cdots \otimes \delta_1$, i.e. all copy numbers are zeros.
 \item Tensor rounding and solution threshold (for the AMEn method) $\eps=10^{-5}$.
\end{itemize}

As a resulting quantity, we compute the average concentrations of all species in time,
$$
\langle x_i \rangle(t) = \dfrac{\sum_{\mathbf{x}} x_i P(\mathbf{x},t)}{\sum_{\mathbf{x}} P(\mathbf{x},t)}, \quad i=1,\ldots,d,
$$
which are presented in Fig. \ref{fig:20d:meanconc}.
One of interesting features in the cascade systems is the delay between the equal concentration levels of different species, which can be observed in Fig. \ref{fig:20d:meanconc}.
Therefore, to keep the time solution history accurately is important to measure such delays.

Additionally, the convergence of the transient solution to the steady state is shown in Fig. \ref{fig:20d:|Au|}, which confirms that the chosen $T$ is enough to approximate the stationary solution with the desired accuracy.

\begin{figure}[h!]
\begin{minipage}[t]{0.49\linewidth}
\centering
\caption{Mean concentrations $\langle x_i \rangle(t)$.}
\label{fig:20d:meanconc}
\includegraphics[width=\linewidth]{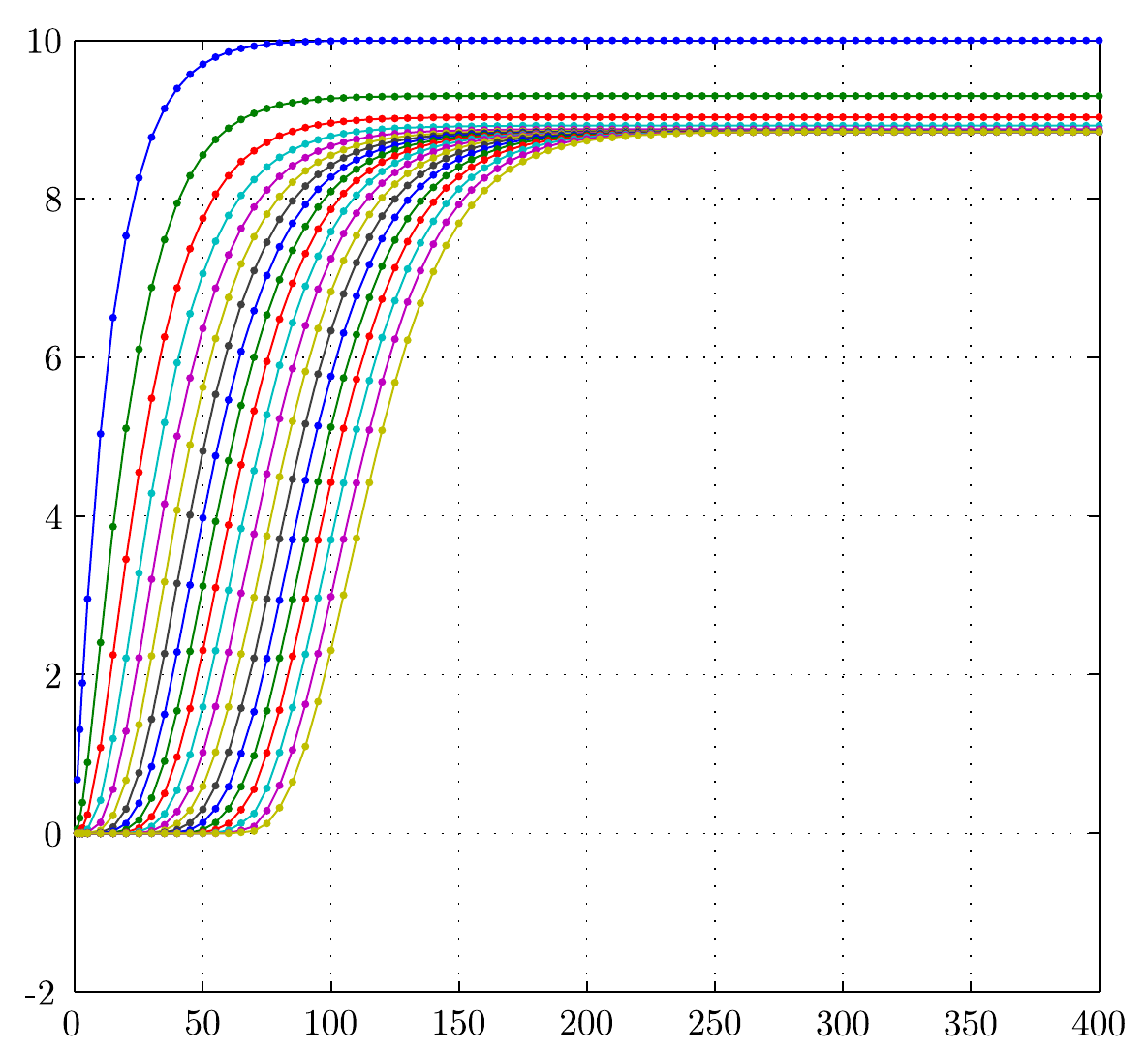}
\end{minipage}
\hfill
\begin{minipage}[t]{0.49\linewidth}
\centering
\caption{Residual $||AP(t)||/||P(t)||$}
\label{fig:20d:|Au|}
\includegraphics[width=\linewidth]{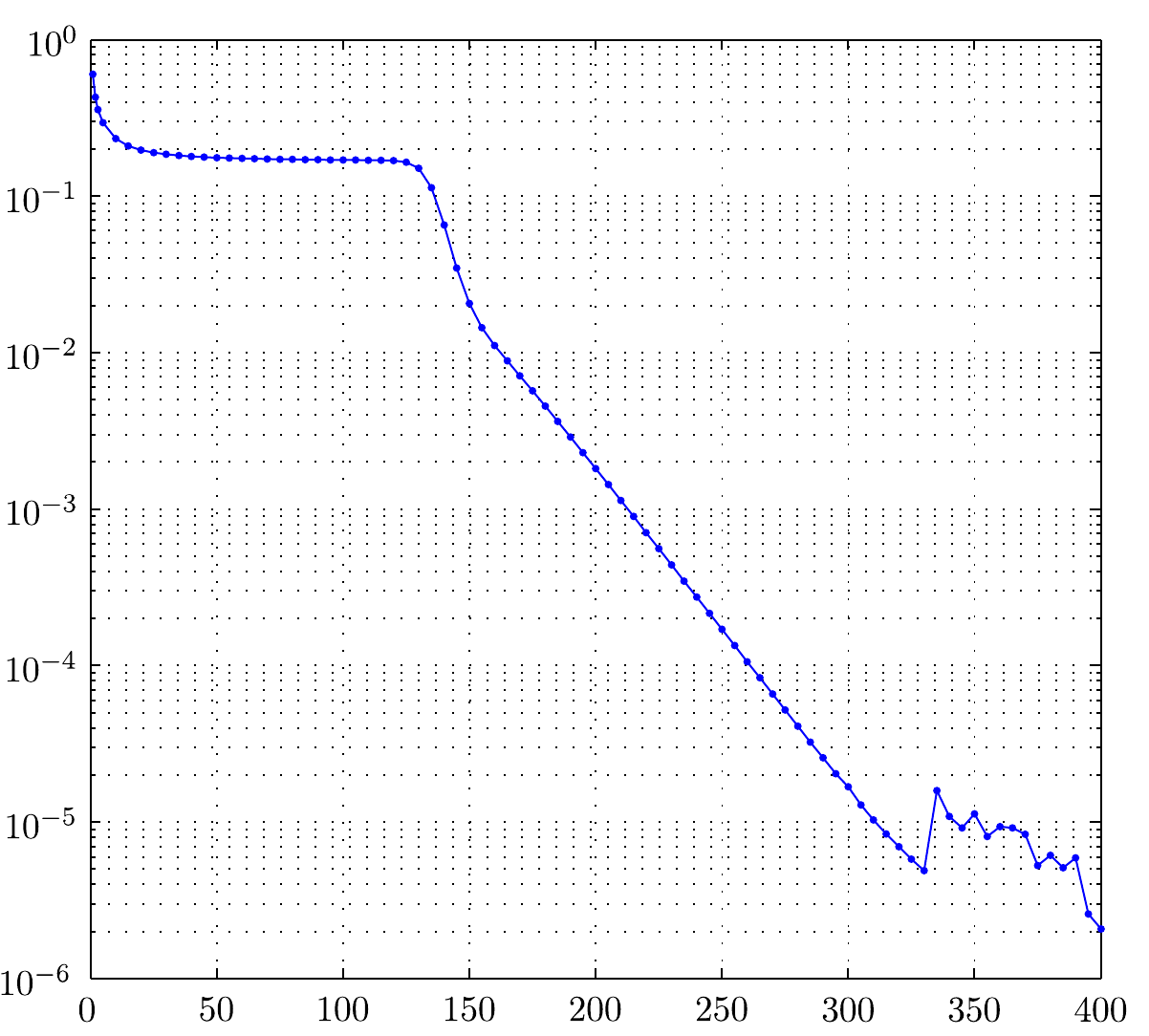}
\end{minipage}
\end{figure}

To demonstrate the performance of the global state-time scheme, we present the CPU times of the solver with different numbers of time steps $N_t$ in each interval $[(q-1)T_0, q T_0]$ (Fig. \ref{fig:20d:ttimes}, left), $q=1,\ldots,T/T_0$, and the time interval widths $T_0$ (Fig. \ref{fig:20d:ttimes}, right).
\begin{figure}[h!]
\caption{CPU time (sec.) vs. discretization parameters. Left: time vs. $\log_2(N_t)$, $T_0=15$. Right: time vs. $T_0$, $N_t=2^{14}$.}
\label{fig:20d:ttimes}
\begin{minipage}[t]{0.49\linewidth}
\centering
\includegraphics[width=\linewidth]{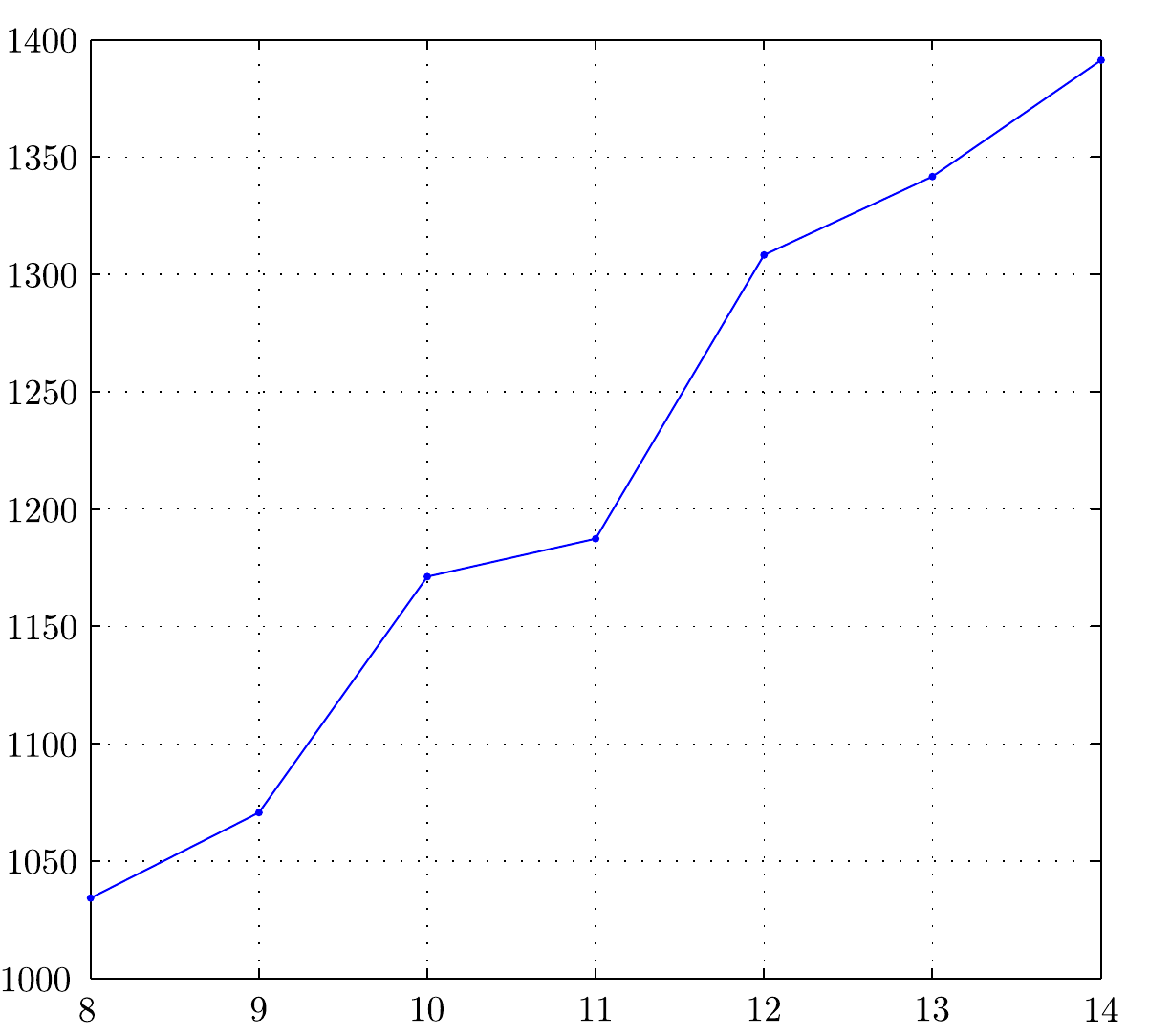}
\end{minipage}
\hfill
\begin{minipage}[t]{0.49\linewidth}
\centering
\includegraphics[width=\linewidth]{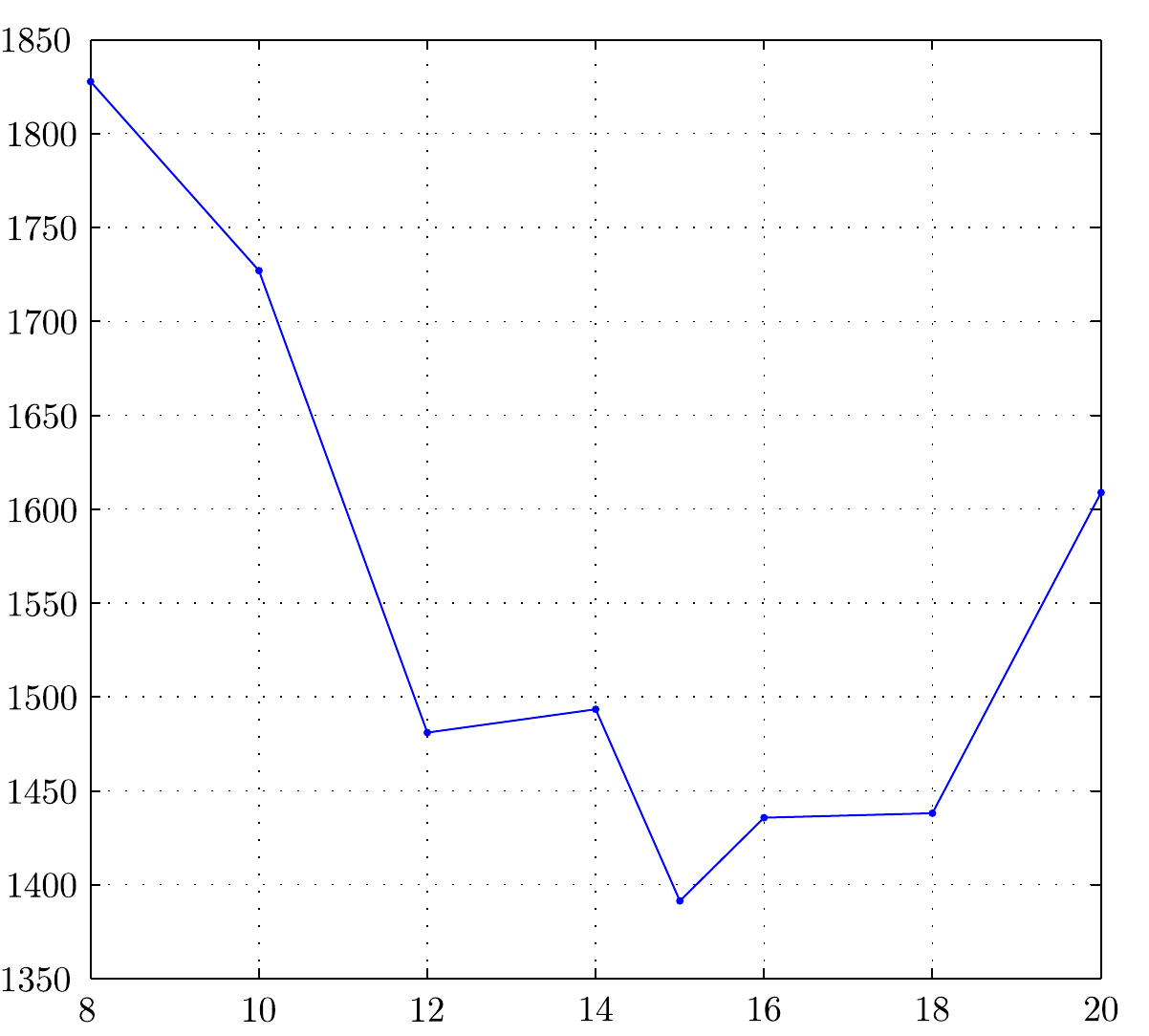}
\end{minipage}
\end{figure}
We see that the computational time grows indeed logarithmically with the time grid size.
The fastest realization was obtained by setting $T_0=15$.
For smaller $T_0$, the solution in each interval is cheap, but the amount of intervals is large.
Contrarily, for large $T_0$ the conditioning (and TT ranks) of each system is high, and it takes more time for the method to converge.

\subsection{A toggle switch in E.Coli with uncertainly defined coefficients}
In this test, we simulate the synthetic bistable genetic toggle switch developed in \emph{Escherichia coli} \cite{gardner-toggle-2000} in presence of a parameter.
The CME model reads
\begin{itemize}
 \item $d=2$, $M=4$:
 \item $w^{1+}(\mathbf{x}) = \dfrac{\alpha_1}{1+x_2^{\beta}}$, $\mathbf{z^{1+}} = -\delta_1$: generation of $S_1$;
 \item $w^{1-}(\mathbf{x}) = x_1$, $\mathbf{z^{1-}} = \delta_1$: destruction of $S_1$;
 \item $w^{2+}(\mathbf{x}) = \dfrac{\alpha_2}{1+\dfrac{x_1}{(1+y/K)^{\eta}}}$, $\mathbf{z^{2+}} = -\delta_2$: generation of $S_2$;
 \item $w^{2-}(\mathbf{x}) = x_2$, $\mathbf{z^{2-}} = \delta_2$: destruction of $S_2$;
 \item $\alpha_1=156.25$, $\alpha_2=15.6$, $\beta=2.5$, $\eta=2.0015$, $K=2.9618 \cdot 10^{-5}$.
\end{itemize}
According to the production rate of $x_1$, we restrict the state space to $\{0,\ldots,255\}^{\otimes 2}$.
A parameter $y$ is the concentration of the IPTG catalyst, and is varying from $10^{-6}$ to $10^{-2}$.
The main feature of this system is two so-called \emph{low} (low concentration of $S_2$) and \emph{high} metastable states.
The probability to find the system in either of states depends on the concentration $y$, see Figure \ref{fig:toggle:meanconc}.

Note that $y$ is not an active specie copy number, but only enters as a parameter in the coefficients, $w^m = w^m(\mathbf{x},y)$, so that
$$
\dfrac{dP(y,t)}{dt} = A(y)  P(y,t) = \sum\limits_{m=1}^M (\mathbf{J}^{\mathbf{z^m}} - \mathbf{J}^0) \diag(w^m(y)) P(y,t) \quad \mbox{for each }y.
$$
Introducing a discretization in the parameter (in this example, the exp-uniform grid is used),
we come to the block-diagonal form of the global system,
$$
\frac{\partial P(t)}{\partial t} = \mathcal{A} P(t) =
\begin{bmatrix}
A(y_1) \\
& \ddots \\
& & A(y_{N_y})
\end{bmatrix}
P(t), \qquad
P(t)=
\begin{bmatrix}
P(y_1,t) \\
\vdots \\
P(y_{N_y},t)
\end{bmatrix},
$$
where $\{y_1,\ldots,y_{N_y}\}$ is the parametric grid, and $A(y_j)$ stands for the original CME matrix \eqref{cme:eqn:cme-shift} with a fixed $y=y_j$, $j=1,\ldots,N_y$.

If the total number of parametric points $N_y$ is moderate,
the very straightforward approach is to solve each CME equation for each $y_j$ independently.
However, if $N_y$ is large ($y$ can represent in fact a \emph{multi}-parameter tuple with $d_y$ variables, resulting in $N_y=N^{d_y}$ degrees of freedom in total),
one may benefit from the tensor-structured data compression,
and solve the global system at once, disregarding its block-diagonality.
The cases of many parameters arise naturally in stochastic equations (e.g. \cite{matthies-galerkin-2005,litvinenko-spde-2013,KhSch-Galerkin-SPDE-2011}), when some coefficients may not be known exactly in advance, but only their possible ranges can be specified, or inverse problems, such as model calibration and sensitivity \cite{sreenath-pathways-2008,rabitz-sensitivity-1983}.
The current example may be assigned to the latter class, but it is not so important from the computational point of view.

Though the tensor structuring is especially efficient for many dimensions,
here we compare the solution of the 3-dimensional $(x_1,x_2,y)$ problem at once in the QTT and QTT-Tucker formats with the standard full-format scheme, which solves the linear systems with sparse matrices in ($x_1,x_2$) independently at all $y$ points,
and show that they may be competitive already for moderate problem sizes.

In our modeling, we seek for the steady state using the Euler iterations (see Section \ref{sec:stationary}) in the time range $[0,1000]$, so that the stationarity accuracy is below the tensor rounding tolerance $\eps=10^{-5}$.
The time step $T_0$ is varied from 1 to 5.
%

First of all, check the computational times versus the parametric grid size and the time step $T_0$ (Figure \ref{fig:toggle:ttimes}).
\begin{figure}[h!]
\caption{CPU time vs. $\log_2(N_y)$. Left: $T_0=1$, middle: $T_0=2$, right: $T_0=5$.}
\label{fig:toggle:ttimes}
\begin{minipage}[t]{0.32\linewidth}
\centering
\includegraphics[width=\linewidth]{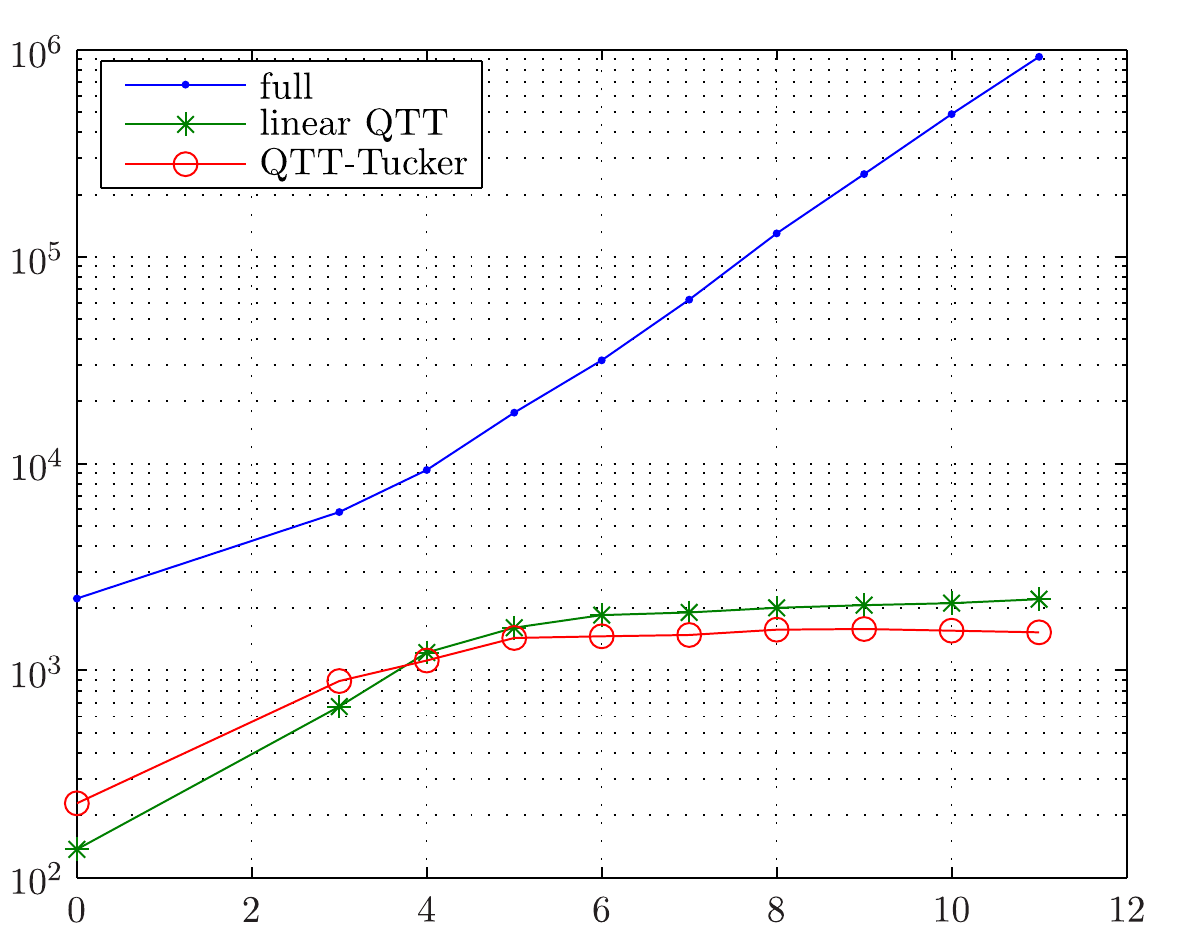}
\end{minipage}
\hfill
\begin{minipage}[t]{0.32\linewidth}
\centering
\includegraphics[width=\linewidth]{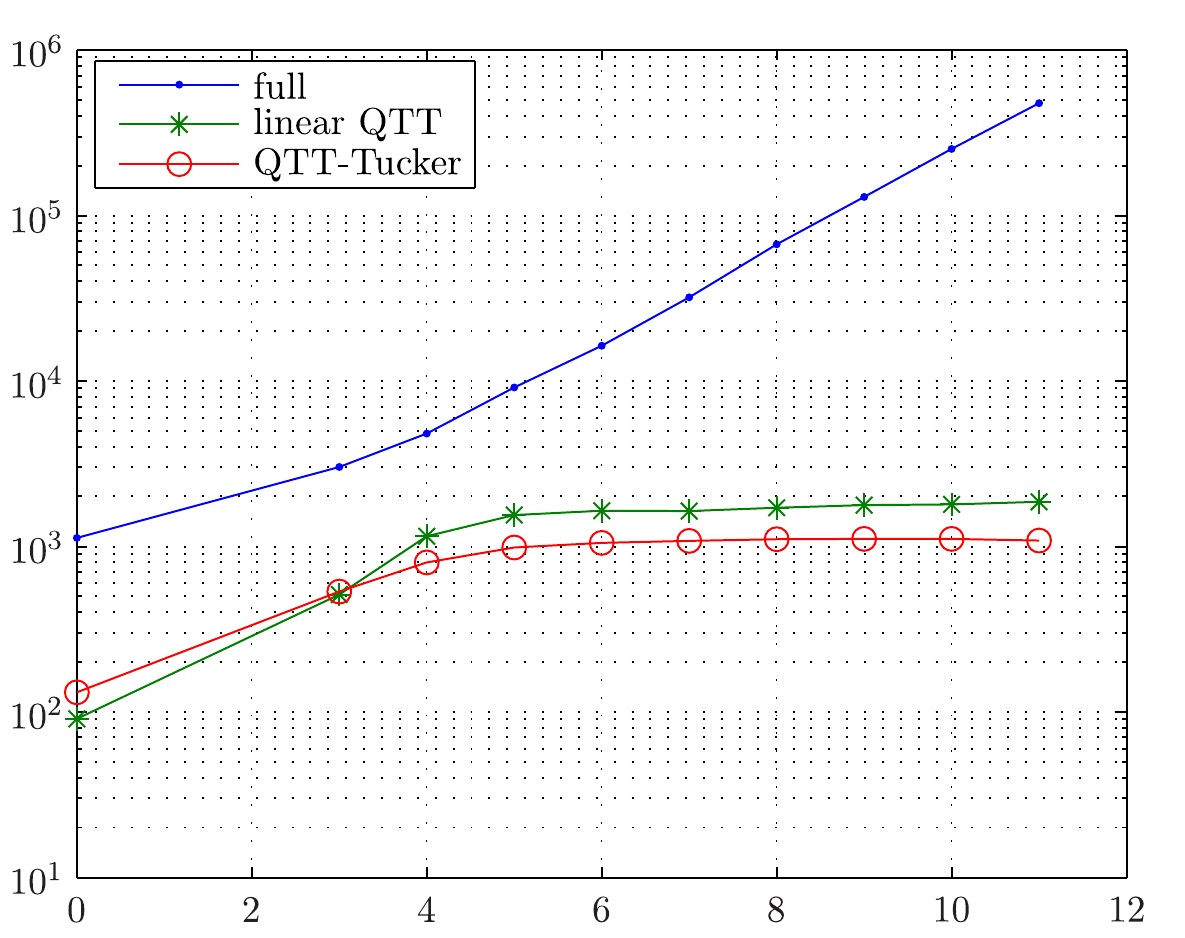}
\end{minipage}
\hfill
\begin{minipage}[t]{0.32\linewidth}
\centering
\includegraphics[width=\linewidth]{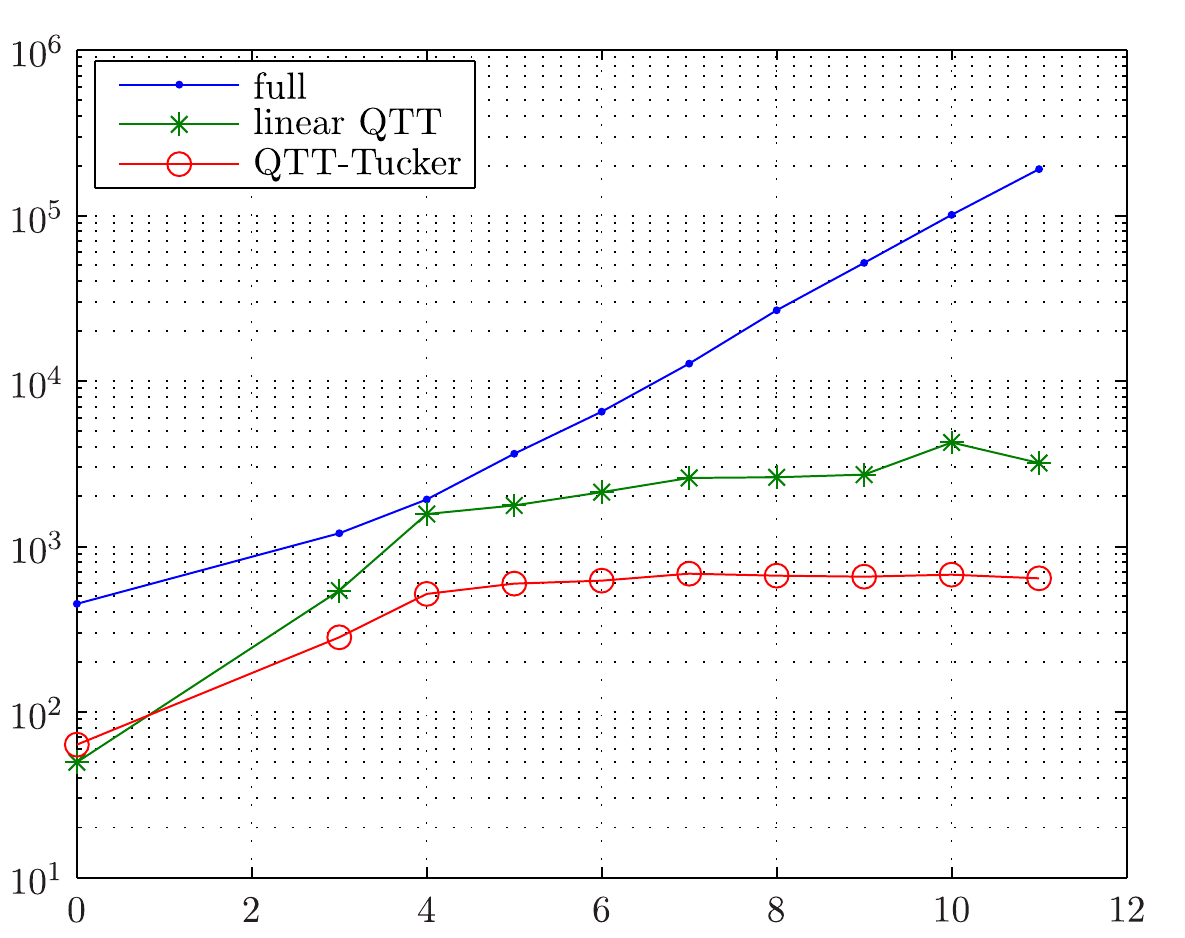}
\end{minipage}
\end{figure}
As expected, the complexity of the ``full'' scheme (independent solution for all parametric points) demonstrates a linear growth w.r.t. $N_y$.
For both QTT-based tensor formats the growth is logarithmic.
Remarkable, even with one parametric point, the tensor schemes overcome the full-format one.
This shows a high efficiency of the proposed methods: with the tensor ranks of the solution of the order of $30-40$, even a two-dimensional problem with $256^2$ grid points can be accelerated.

When $T_0$ and $N_y$ increase, so do the conditioning of the matrix and solution ranks.
In this case, the QTT-Tucker format provides additional complexity reduction, by a factor of ca. 4 w.r.t. the linear QTT.

Since we introduce a tensor rounding error of magnitude $10^{-5}$, we need to check the actual error in the observable quantities, especially when solutions at different parametric points (hence different scales) are approximated in one tensor structure.
We track the point $y=3\cdot 10^{-5}$ (it is located in the transient region, see Fig. \ref{fig:toggle:meanconc}) by adding it explicitly to the grid, and check the error in the average concentrations, $\frac{|\langle x_i \rangle - \langle x_i \rangle_{ex}|}{\langle x_i \rangle_{ex}}$,
where ${\langle x_i \rangle_{ex}}$ is taken from the simulation in the full format, see Table \ref{tab:toggle}.
For brevity, we show only the mean and maximal errors over different $N_y$ and $T_0$.
We see that the error may increase by a factor up to 250 w.r.t. the solution tolerance,
but usually an accuracy $\mathcal{O}(10^{-3})$ is enough for such phenomenological models, while the complexity may be substantially reduced.

\begin{minipage}[t]{0.45\linewidth}
\centering
\captionof{table}{Mean and maximal relative errors of $\langle x_i \rangle$ at $y=3\cdot 10^{-5}$}
\label{tab:toggle}
\begin{tabular}{c|cc|cc}
            & \multicolumn{2}{c|}{linear QTT} & \multicolumn{2}{c}{QTT-Tucker} \\ \hline
            & $x_1$   & $x_2$    & $x_1$   & $x_2$    \\ \hline
mean        & 8.8e-4 &  7.8e-5 & 5.5e-4 & 4.9e-5  \\
max         & 2.5e-3 &  2.2e-4 & 2.1e-3 & 1.8e-4 \\
$T_0^{\max}$ & 1      &  1      &   1    &  1    \\
$N_y^{\max}$  & $2^7$  &  $2^7$  &  $2^6$ & $2^6$ \\
\end{tabular}
\end{minipage}
\hfil
\begin{minipage}[t]{0.45\linewidth}
 \centering
 \captionof{figure}{Mean concentrations $\langle x_1\rangle$, $\langle x_2\rangle$ versus $y$}
 \label{fig:toggle:meanconc}
 \includegraphics[width=\linewidth]{./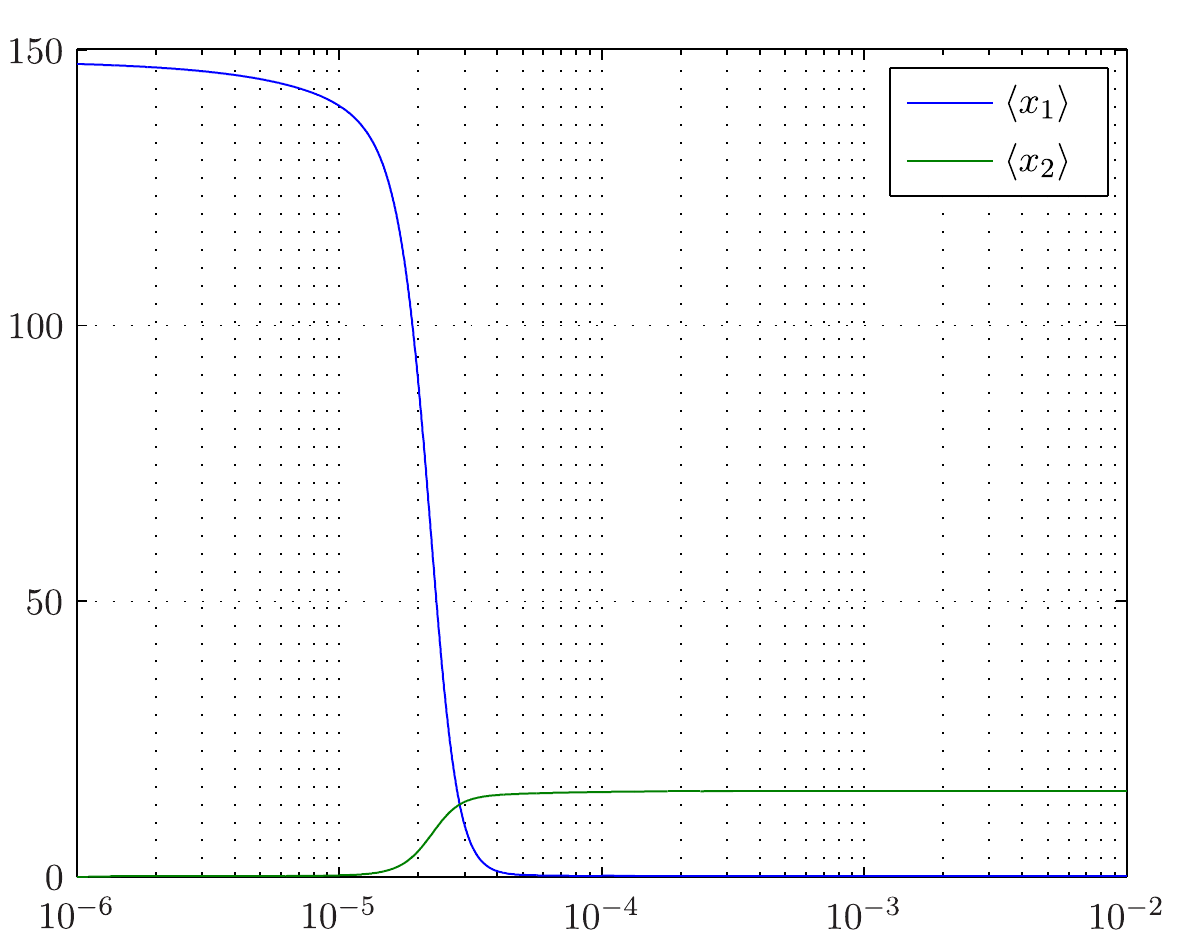}
\end{minipage}\\


Finally, we plot the average concentrations of both reacting proteins versus the concentration of the catalyst, see Figure \ref{fig:toggle:meanconc}.
The quantity $\langle x_2\rangle$ can be used also as a measure of the fraction of the cells occupying the high (low) state.
Indeed, it demonstrates the asymptotic: with $y$ tending to $\infty$, the cells tend to stay in the high state.
As a result, normalizing $\langle x_2\rangle$ to its maximal value, obtain the fraction of the high-state cells.
It is the plot that was given in \cite{gardner-toggle-2000}, Fig. 5(b), and we observe a good agreement with the experimental results.

\subsection{$\lambda$-phage}\label{sec:lphage-num}
The last example is the simulation of the life cycle of the bacteriophage-$\lambda$ \cite{hegland-cme-2007,jahnke-cme-2008}.
The first paper \cite{hegland-cme-2007} considers the sparse grids approach.
The second one is more tensor related and uses the so-called \emph{Dirac-Frenkel} principle for the dynamical low-rank approximation in the Tucker format (DLRA).
However, the simulation was done for a small time interval ($T=10$) only, which is far from providing the stationary solution.
The reason is that the stationary concentration of the second specie is too high ($\sim 10^4$) to be efficiently treated by the algorithms proposed.
In this article, we present an efficient computation of the stationary solution on very large grids with the use of the QTT format and the alternating linear solver.

The model parameters read
\begin{itemize}
 \item $d=5$, $M=10$.
 \item $w^{1+}(\mathbf{x}) = \dfrac{a_1 b_1}{b_1+x_2}$, $\mathbf{z^{1+}} = -\delta_1$: generation of $S_1$; \quad $a_1=0.5$, $b_1=0.12$.
 \item $w^{1-}(\mathbf{x}) = c_1 \cdot x_1$, $\mathbf{z^{1-}} = \delta_1$: destruction of $S_1$; \quad $c_1=0.0025$.
 \item $w^{2+}(\mathbf{x}) = \dfrac{(a_2+x_5)b_2}{b_2 + x_1}$, $\mathbf{z^{2+}} = -\delta_2$: generation of $S_2$; \quad $a_2=1$, $b_2=0.6$.
 \item $w^{2-}(\mathbf{x}) = c_2 \cdot x_2$, $\mathbf{z^{2-}} = \delta_2$: destruction of $S_2$; \quad $c_2=0.0007$.
 \item $w^{3+}(\mathbf{x}) = \dfrac{a_3 b_3 x_2}{b_3\cdot x_2+1}$, $\mathbf{z^{3+}} = -\delta_3$: generation of $S_3$; \quad $a_3=0.15$, $b_3=1$.
 \item $w^{3-}(\mathbf{x}) = c_3 \cdot x_3$, $\mathbf{z^{3-}} = \delta_3$: destruction of $S_3$; \quad $c_3 = 0.0231$.
 \item $w^{4+}(\mathbf{x}) = \dfrac{a_4 b_4 x_3}{b_4\cdot x_3+1}$, $\mathbf{z^{4+}} = -\delta_4$: generation of $S_4$; \quad $a_4=0.3$, $b_4=1$.
 \item $w^{4-}(\mathbf{x}) = c_4 \cdot x_4$, $\mathbf{z^{4-}} = \delta_4$: destruction of $S_4$; \quad $c_4 = 0.01$.
 \item $w^{5+}(\mathbf{x}) = \dfrac{a_5 b_5 x_3}{b_5\cdot x_3+1}$, $\mathbf{z^{5+}} = -\delta_5$: generation of $S_5$; \quad $a_5=0.3$, $b_5=1$.
 \item $w^{5-}(\mathbf{x}) = c_5 \cdot x_5$, $\mathbf{z^{5-}} = \delta_5$: destruction of $S_5$; \quad $c_5=0.01$.
\end{itemize}
The matrix assembly is done by summing the rank-2 Laplace-like TT decomposition of the destruction part and 5 creation parts, obtaining the total TT rank bound $7$.

First of all, we give a comparison with the dynamical Tucker approximation algorithm from \cite{jahnke-cme-2008}.
That paper reports that the grid of sizes $15 \times 40 \times 10 \times 10 \times 10$ was used; however, to employ the QTT format, we restrict ourselves to the powers of two, $16 \times 64 \times 16 \times 16 \times 16$.
The time interval is $T=10$, and we employ the accurate time integration to resolve the transient processes,
by solving the coupled state-time system \eqref{cme:eqn:global_time} for different inner time intervals $T_0$ and time grid sizes $N_t$.
As the initial state, the ($n=3$, $p=[0.05,...,0.05]$)-multinomial distribution was chosen,
$$
P(\mathbf{x},0) =
\dfrac{3!}{x_1! \cdots x_5! \cdot (3-|x|)!} 0.05^{|x|} (1-5 \cdot 0.05)^{3-|x|} \cdot \theta(3-|x|),
$$
where $|x| = x_1 + \cdots + x_5$, and $\theta(\xi)$ is the Heaviside function.
This function can be constructed straightforwardly as a full-format $4 \times 4 \times 4 \times 4 \times 4$-tensor thanks to the zeroing Heaviside function if any of $x_i$ is greater than $3$.
After that, the TT decomposition (with ranks 4) is computed, and each factor is expanded by zeros to the appropriate grid size.
In the end, the TT representation is reapproximated into the QTT one.

The CPU timings are presented in Table \ref{tab:lphage:ttimes}.
\begin{table}[h!]
\centering
\caption{CPU times (sec.) versus $T_0$ and $N_t$, small time range}
\label{tab:lphage:ttimes}
\begin{tabular}{c|cccc||c||c}
 \multicolumn{5}{c||}{linear QTT}                           & Full    & DLRA \cite{jahnke-cme-2008} \\ \hline
 $T_0$ $\backslash$ $N_t$ & 256 & 512 & 1024 & 2048        & $T_0=0.01$   &  \\ \hline
 1 & 20.66 &  23.89 &  21.18 &  20.38                      & 1071            & 300 \\
 2 & 18.49 &  19.52 &  19.44 &  17.70 \\
 5 & 16.33 &  16.35  & 17.55  & 16.01 \\
 10 & 25.34  & 15.51 &  12.49 &  11.23 \\
\end{tabular}
\end{table}
With such small QTT ranks ($\sim 30$), the CPU times are almost independent on the time grid size, and even tend to decrease, since the finer discretization provides more accurate and smooth solution.
The same situation may be observed with respect to $T_0$ as well.
In all cases the computation is much faster than 5 minutes of the DLRA, and a fortiori than $\sim 3$ hours of the SSA, reported in \cite{jahnke-cme-2008}.

To check the accuracy, we compare the marginal probability densities with those are computed in the full format on the grid $16 \times 64 \times 10 \times 10 \times 10$.
The corresponding Crank-Nicolson propagation matrices of size $1024000$ were assembled in the MATLAB \texttt{sparse} format, with the use of \texttt{gmres} as the iterative solver.
The integration was conducted with the time step $0.01$, which required $1071$ seconds of CPU time (see Table \ref{tab:lphage:ttimes}).
The marginal distributions are shown in Figure \ref{fig:lphage:Pi}, and the 2-norm errors are presented in Table~\ref{tab:lphage:errs}.
\begin{figure}[h!]
\centering
\caption{Left to right, top: $P_1(x_1)$, $P_2(x_2)$, $P_3(x_3)$. Bottom: $P_4(x_4)$, $P_5(x_5)$}
\label{fig:lphage:Pi}
\begin{minipage}[t]{0.30\linewidth}
\centering
\includegraphics[width=\linewidth]{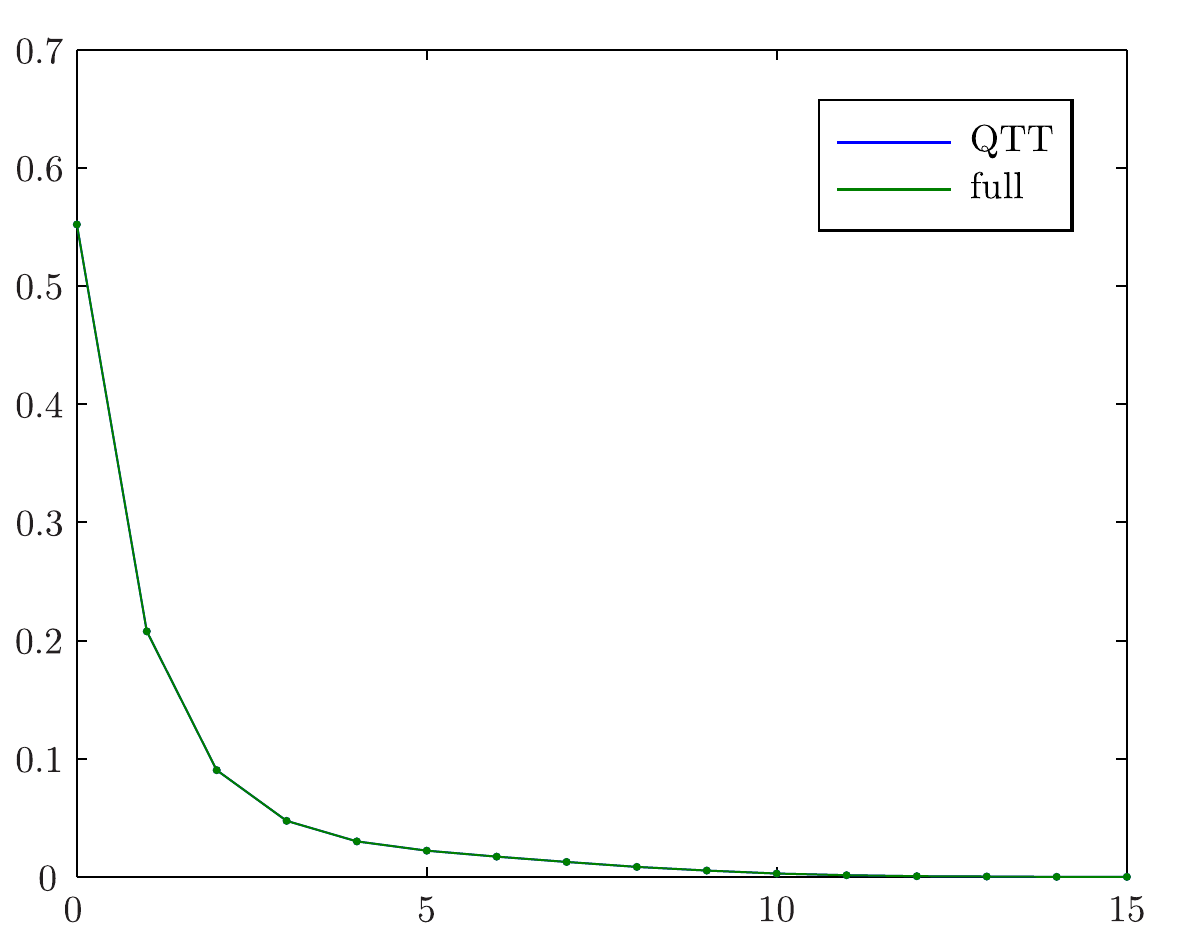}
\end{minipage}
\hfill
\begin{minipage}[t]{0.30\linewidth}
\centering
\includegraphics[width=\linewidth]{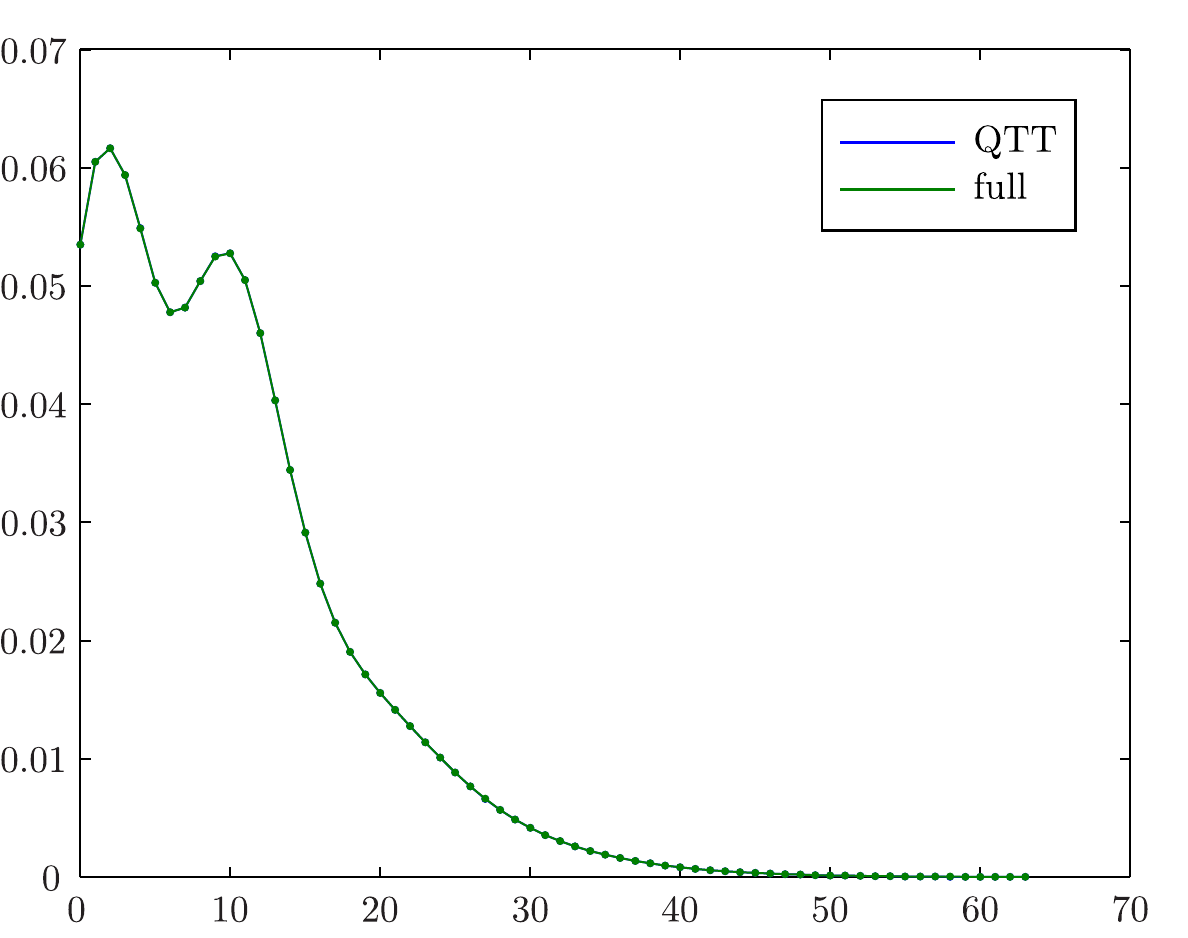}
\end{minipage}
\hfill
\begin{minipage}[t]{0.30\linewidth}
\centering
\includegraphics[width=\linewidth]{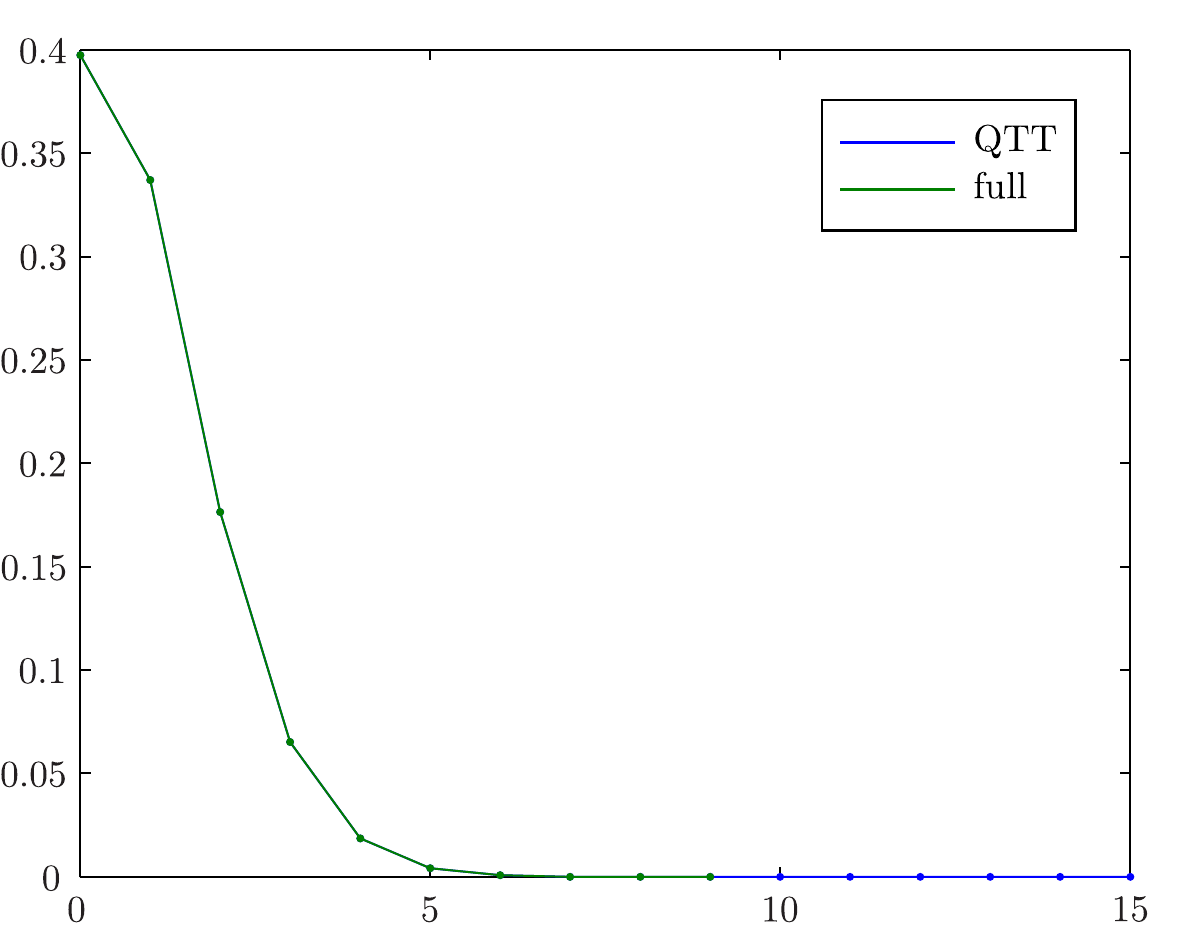}
\end{minipage}
\\
\begin{minipage}[t]{0.30\linewidth}
\centering
\includegraphics[width=\linewidth]{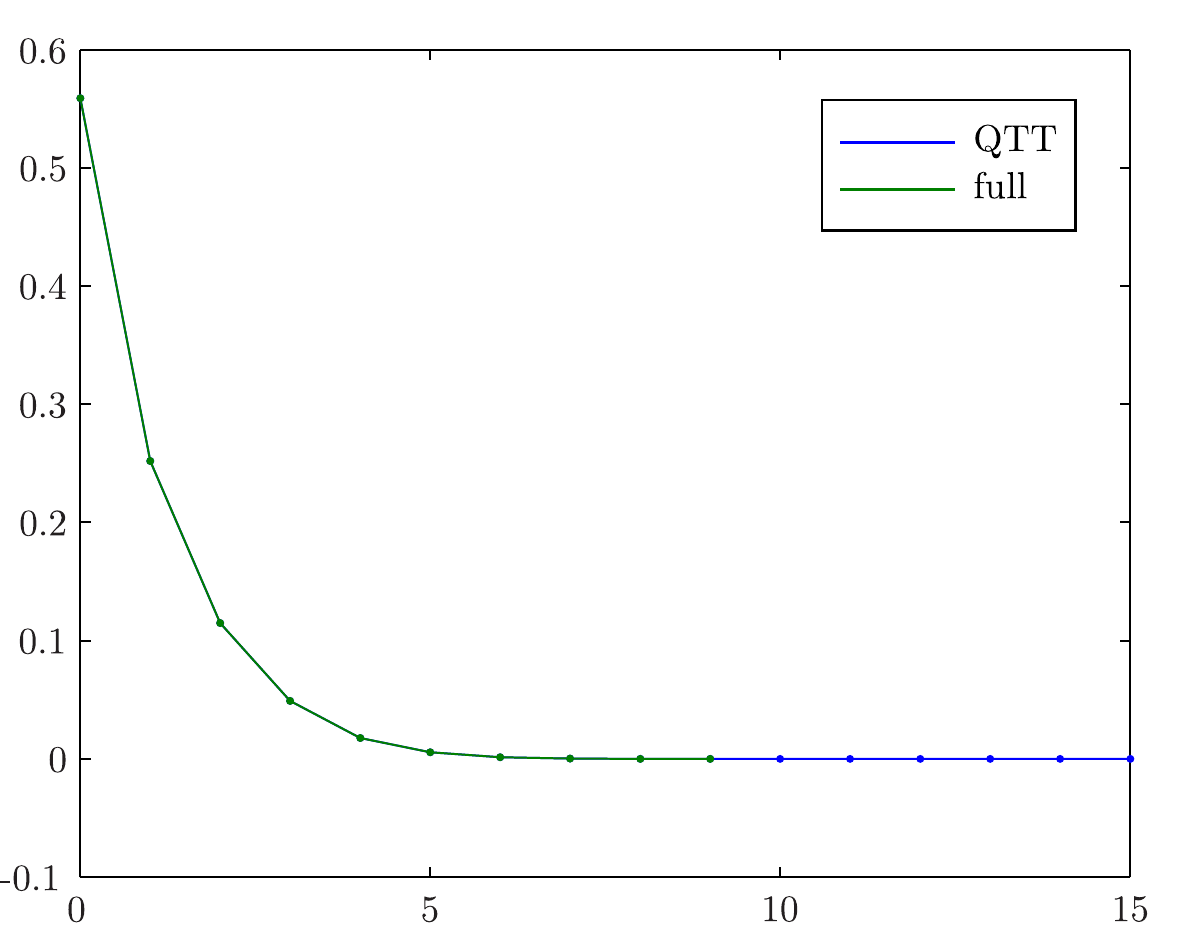}
\end{minipage}
\begin{minipage}[t]{0.15\linewidth}
\hspace*{\linewidth}
\end{minipage}
\begin{minipage}[t]{0.30\linewidth}
\centering
\includegraphics[width=\linewidth]{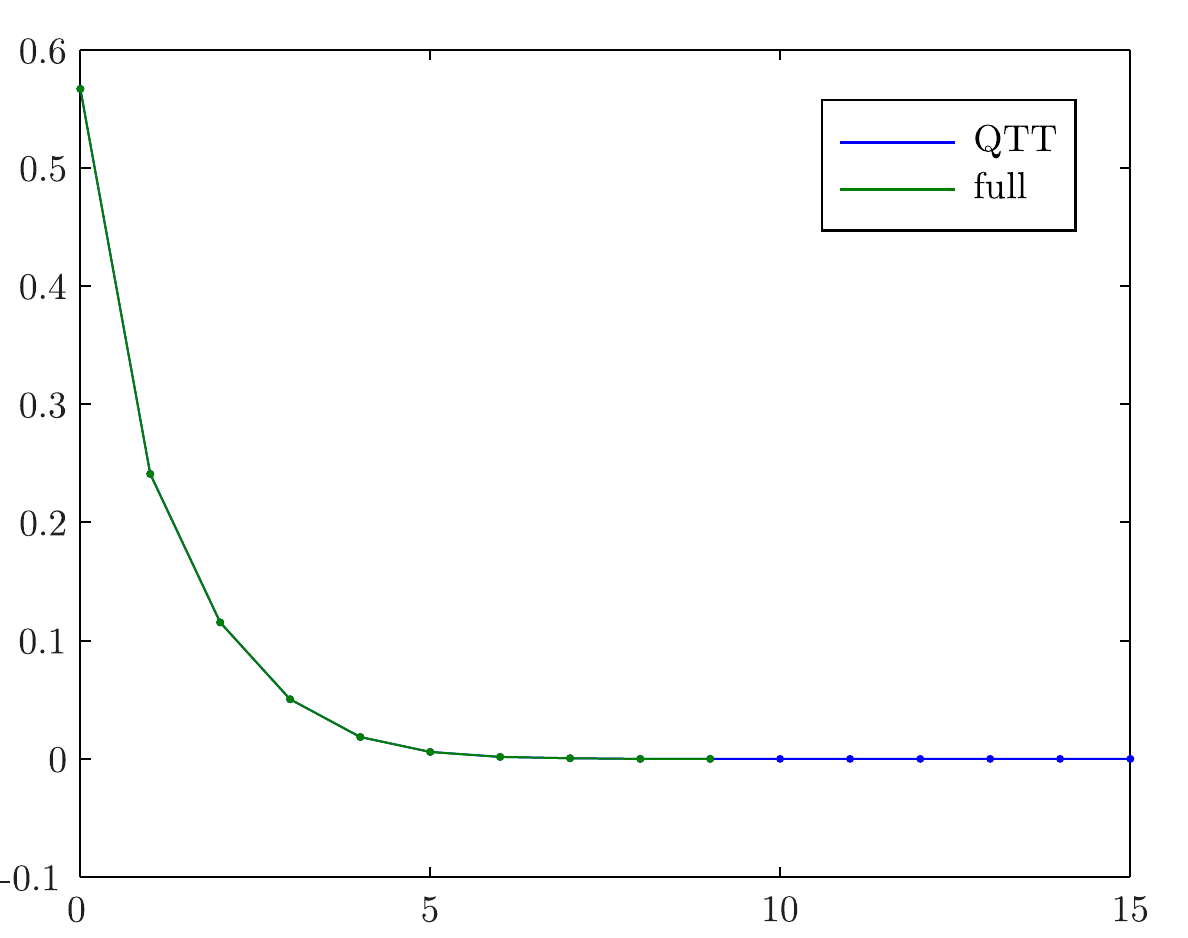}
\end{minipage}
\end{figure}
\begin{table}[h!]
\centering
\caption{Accuracy versus $T_0$ and $N_t$, small time range}
\label{tab:lphage:errs}
\begin{tabular}{c|cccc||c}
\multicolumn{5}{c||}{$\frac{||P_{qtt}-P_{full}||}{||P_{full}||}$}      & $\frac{||P_{DLRA}-P_{SSA}||}{||P_{SSA}||}$ \cite{jahnke-cme-2008} \\ \hline
 $T_0$ $\backslash$ $N_t$ & 256 & 512 & 1024 & 2048             &  \\ \hline
1 & 7.971e-5 & 7.096e-5 & 7.277e-5 & 8.543e-5 			& 3e-3 \\
2 & 4.780e-5 & 6.700e-5 & 4.356e-5 & 7.643e-5  \\
5 & 6.387e-5 & 8.733e-5 & 2.102e-4 & 2.769e-4  \\
10 & 1.435e-4 & 2.296e-4 & 2.933e-4 & 3.963e-4  \\
\end{tabular}
\end{table}
One should note that the time step $0.01$ used in the full-format simulations, as well as the FSP truncation at lower grid sizes yield the error in the full solution $\mathcal{O}(10^{-4})$.
That is, the QTT solution with $N_t=256$, $T_0=2$ ($\tau \sim 0.01$) appears to be closer to the full-format one, than with the finer discretizations.
However, in all cases the accuracy $\mathcal{O}(10^{-4})$ is achieved (note that the solutions in Fig. \ref{fig:lphage:Pi} are indistinguishable), while the tensor rounding tolerance was set to $\eps=10^{-5}$.

To integrate the system until the stationary solution is a much more difficult problem.
First, we set the grid sizes to $128 \times 65536 \times 64 \times 64 \times 64$, in accordance to very high ($\sim 4 \cdot 10^4$) concentrations of the second specie, see Fig. \ref{fig:lphage:meanconc}.
Second, the relaxation time is large, $T \sim 2\cdot 10^4$.
In our simulation, we use the exponential splitting of the time interval,
$$
t_q = \exp(0.05 \cdot q), \quad q=1,...,200.
$$
To obtain an accurate time history, in each subinterval $[t_{q-1}, t_q]$ we solve the coupled state-time system with an additional splitting into $1024$ time steps (encapsulated in the QTT format).
The convergence history and the cumulative CPU times are shown in Figure \ref{fig:lphage:Au-ttimes} and Table \ref{tab:lphage:ttimes_large}.
We see that it takes about an hour of computational time to recover the whole time history with the stationarity accuracy $\sim 10^{-7}$.
Despite the large grids, this is the case, when the QTT-Tucker does not outperform the linear QTT format due to large core ranks (i.e. the physical dimensions are strongly connected), and its larger (cubic) asymptotic leads to a larger time. In this example, the QTT-Tucker solver takes about $4000$ seconds.

\begin{table}[h!]
\begin{minipage}[t]{0.48\linewidth}
\centering
\caption{CPU times (sec.), large time range}
\label{tab:lphage:ttimes_large}
\begin{tabular}{cc||cc}
\multicolumn{2}{c||}{linear QTT} & \multicolumn{2}{c}{QTT-Tucker} \\ \hline
$N_t=2^{10}$ & $N_t=1$           & $N_t=2^{10}$ & $N_t=1$ \\ \hline
3500         & 670               & 4000         & 410
\end{tabular}
\end{minipage}
\hfil
\begin{minipage}[t]{0.48\linewidth}
\centering
\caption{Accuracies of $\langle x_i \rangle(T)$.}
\label{tab:lphage:steady_acc}
\begin{tabular}{ccccc}
$S_1$ & $S_2$ & $S_3$ & $S_4$ & $S_5$ \\
4.6e-5 &   1.7e-6  &  6.2e-8  &  3.1e-7 &  1.7e-7. \\
\end{tabular}
\end{minipage}
\end{table}

If we are not interested in the transient processes, we may use the implicit Euler iterations over the time points $t_q$.
The total CPU times are shown in Table \ref{tab:lphage:ttimes_large}, and the relative accuracies of the mean concentrations at the final time point w.r.t. the finer time splitting are shown in Table \ref{tab:lphage:steady_acc}.
%

\begin{figure}[h!]
\begin{minipage}[t]{0.49\linewidth}
 \centering
 \caption{Residual $||AP(t)||/||P(t)||$, and the cumulative CPU time (sec.)}
 \label{fig:lphage:Au-ttimes}
 \includegraphics[width=\linewidth]{./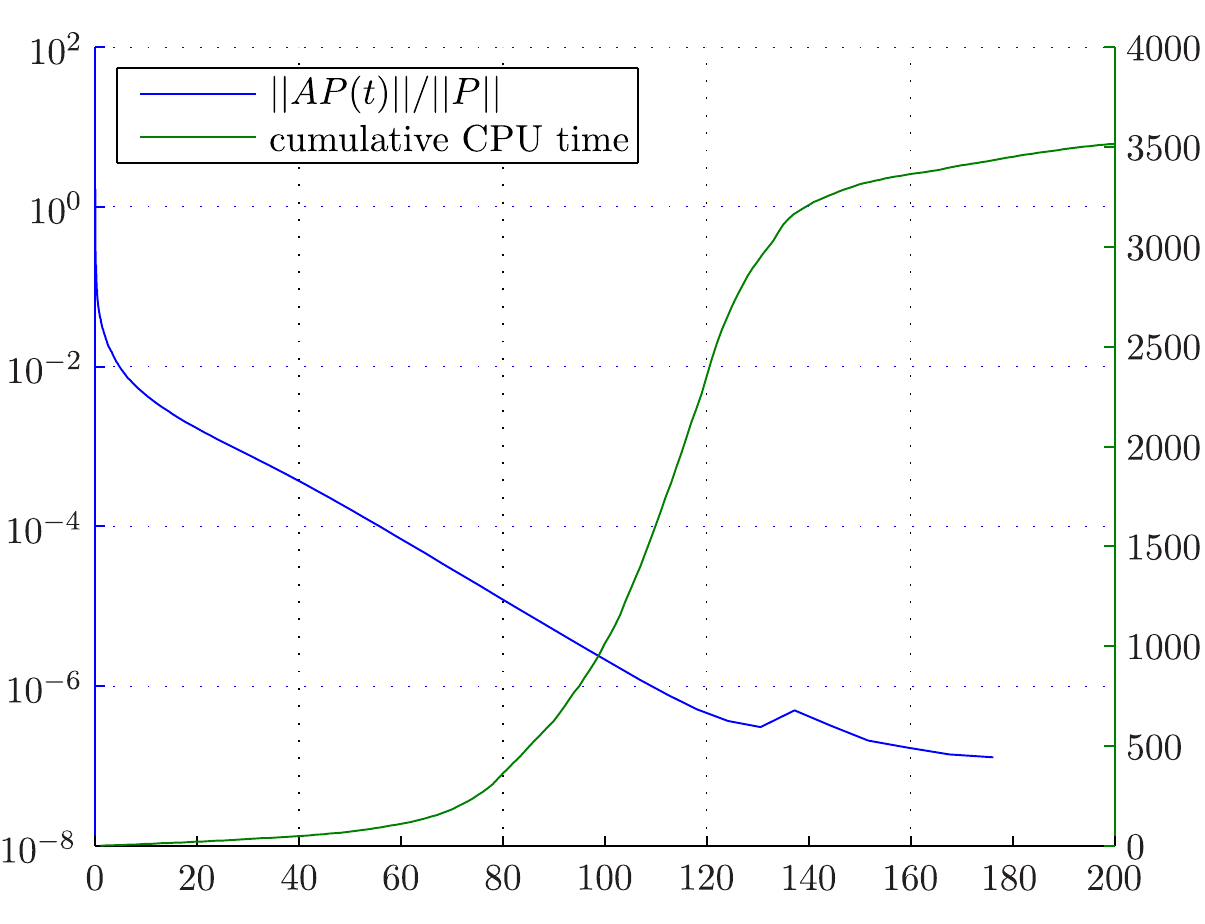}
\end{minipage}
\hfill
\begin{minipage}[t]{0.49\linewidth}
 \centering
 \caption{Average concentrations $\langle x_i\rangle$ vs. $t$ \\ \hspace*{1mm}}
 \label{fig:lphage:meanconc}
 \includegraphics[width=\linewidth]{./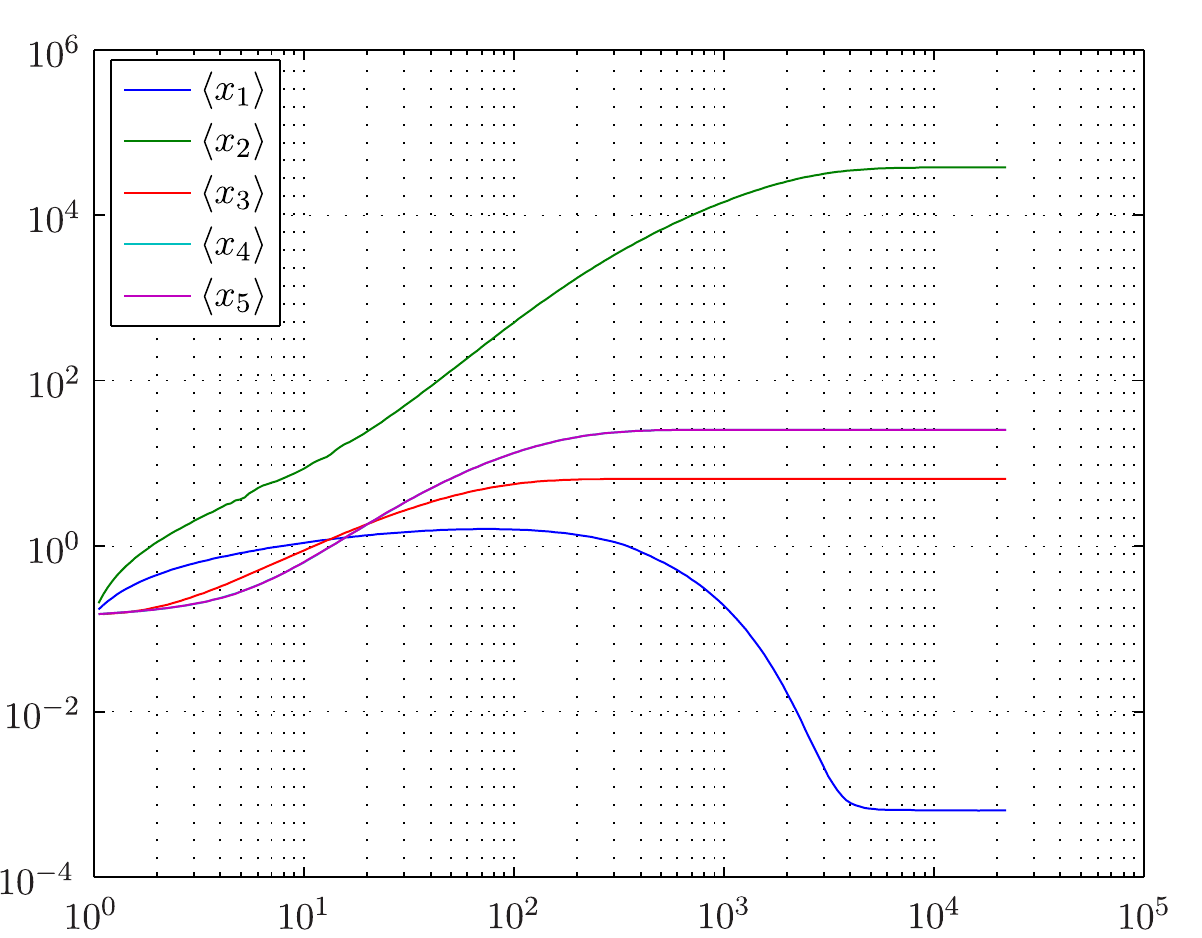}
\end{minipage}
\end{figure}

\section{Conclusions}

We have investigated the tensor product structure approach to the Chemical Master Equation.
The main contributions include:
\begin{itemize}
 \item analysis of the operators in the TT and QTT format,
 \item construction of the QTT- and QTT-Tucker-based computational algorithm for the multidimensional CME, and its complexity analysis,
 \item demonstration of the computational efficiency of the tensor-structured solution method applied to the block state-time discretized system.
\end{itemize}
The techniques have been verified on commonly used model biological systems governed by the CME, such as cascade gene networks, chemical switches, as well as more realistic ones, such as the $\lambda$-phage.
The algorithms employed for the block state-time systems \eqref{cme:eqn:global_time}, in particular, the AMEn linear solver, manifest a significant reduction of the computational time and error with respect to the previous methods, such as the time stepping iterations on tensor product manifolds, and classical Monte-Carlo-like methods (SSA).
Use of the virtual tensorisation (QTT format) allows to treat efficiently even the cases of very large state and time grids, which appear in high-concentration systems (Section \ref{sec:lphage-num}).
In addition, the simulation in presence of uncertainly (parametrically) defined coefficients has been considered.
Even with the simplest approach (coupled state-parametric system) the tensor methods have been found to be a very promising tool.
More thorough study of parametric equations, particularly in connection with the tensor product techniques, is a matter of future research.


\end{document}